\newtheorem{thrm}{Theorem}[section]
\newtheorem{lemma}[thrm]{Lemma}
\newtheorem{cor}[thrm]{Corollary}
\theoremstyle{definition}
\newtheorem{defn}[thrm]{Definition}
\theoremstyle{remark}
\newtheorem{remark}[thrm]{Remark}
\numberwithin{equation}{section}
\newcommand{\dbar}{$\bar{\partial}$}
\newcommand{\lre}{\mathcal{E}}
\newcommand{\opnorm}{\@ifstar\@opnorms\@opnorm}
\newcommand{\@opnorms}[1]{%
  \left|\mkern-1.5mu\left|\mkern-1.5mu\left|
   #1
  \right|\mkern-1.5mu\right|\mkern-1.5mu\right|
}
\newcommand{\@opnorm}[2][]{%
  \mathopen{#1|\mkern-1.5mu#1|\mkern-1.5mu#1|}
  #2
  \mathclose{#1|\mkern-1.5mu#1|\mkern-1.5mu#1|}
} \makeatother
\begin{document}

\title[Boundary value problems]{
	Boundary value problems of elliptic operators
 and reduction to the boundary techniques}

\author{Dariush Ehsani}

\address{
	FIZ Karlsruhe - Leibniz Institute for Information Infrastructure\\
	Department of Mathematics\\
	Franklinstr. 11\\
	D-10587 Berlin\\
	Germany}
\email{dehsani.math@gmail.com}

\subjclass[2010]{35J25, 35S15, 32W25}

\begin{abstract}
	We study properties of 
pseudodifferential operators which arise in their
 use in boundary value problems.  
  Smooth domains as well as intersections of 
smooth domains are considered. 
\end{abstract}

\maketitle

\bibliographystyle{plain}

\section{Introduction}

The purpose of this article is twofold.  First,
in the case of smoothly bounded
 domains, we collect and
expand on some of the known machinery involved in
the technique of reducing a boundary value problem
 to the boundary.  Second, we study some of the
operators involved in the technique in the case of 
 intersection domains.  In this case, we 
 introduce some
weighted Sobolev spaces which are of use in 
 concluding estimates.  
 
In the case of smooth domains, we follow
 the work of H\"{o}rmander \cite{Hor66}
  to handle boundary value problems
of elliptic equations in Section
 \ref{secPseudohalf}.  We provide our own
 proofs of the mapping properties of operators 
we study, although several of the proofs can be 
 adapted from those in 
  \cite{BdM71} or
  \cite{CP} or \cite{Hor66}.
   One advantage in providing our own
presentation is the relaxation of some of the 
 assumptions in the above classical works.  For 
instance, we do not consider only symbols which
 are rational as in 
  \cite{CP} or \cite{Hor66}, or even 
  symbols with poles given by homogeneous
 first order tangential symbols,
 as in \cite{BdM71}.  Another useful advantage is 
  the immediate recognition of the inverses to elliptic 
operators we consider in our examples as belonging to
 the class of operators studied, without further 
  reductions or expansions.
 
  It is the case, however,
  that in our discussion of 
  smooth domains we work with operators which 
are similar to, or can be reduced to, 
 those of Boutet de Monvel in \cite{BdM71}.
   We use operators 
  which we define
 to be {\it decomposable} 
(see Definition \ref{defnDecomp}) which, roughly
 speaking,
 defines operators with symbols which are meromorphic
with respect to the transform variable dual to the
 defining function, has residues in this variable 
which themselves are 
 symbols (in the tangential directions), as well 
  as poles whose imaginary parts are 
elliptic symbols in the tangential 
 directions.  
 We recall the definition of the 
{\it transmission property} given in \cite{BdM71}.  
We let $(x,\rho)\in \mathbb{R}^{n+1}$
  be coordinates near a boundary 
point (taken to be the origin) of 
 a smooth domain, where $\rho$ is a
 defining function for the domain.  Then with 
$\eta$ the transform variable dual to $\rho$ and
 $\xi\in \mathbb{R}^n$, the dual to $x\in\mathbb{R}^n$,
an operator of order $k$ has the transmission property if
 its symbol (and its derivatives with respect to
  the $x$ and $\rho$ variables) has an expansion
of the form
\begin{equation*}
 \sum_{j=0}^k \alpha_j(x,\xi) \eta^j +
   \sum_{j=0}^{\infty} \beta_j(x,\xi)
    \frac{(|\xi| - i \eta)^j}
    {(|\xi| + i \eta)^{j+1}},
\end{equation*}
for $\rho =0$,
with $\alpha_j$ a symbol of order $k-j$ and
 $\beta_j$  a symbol of order $k+1$,
modulo smoothing operators.  Our definition of 
 decomposable allows for inhomogeneous poles in the
  denominators, for instance a symbol of the form
\begin{equation*}
 \frac{1}{(\eta-i|\xi|b(x,\xi))^{2}}
\end{equation*}
for some (non-vanishing) 
 zero order symbol, $b(x,\xi)$.  It is not, 
 however, for this (slight) increase in generality that
  we introduce our definition, but rather because
it is immediate that an operator falls under
 our definition just by looking at its poles, without 
  first having to apply contour integrations
 or a partial fractions decomposition to see
if it fits the transmission property.  
 For instance, it is immediately seen that 
 the inverses to the elliptic operators we consider
satisfy our definition of decomposable, even if all the 
 analysis with some reduction work could be handled
  by looking at the mapping properties of 
  operators with the transmission property. 
 Our definition also allows use to treat together 
  both Poisson operators 
  and Green operators, as
 defined in \cite{BdM71}, to handle operators acting on 
  boundary distributions and distributions supported on the
 entire domain, respectively.  

As an example, consider the Laplacian,
 $\partial_x^2
  + \partial_\rho^2$ on $\mathbb{R}^2$ with
symbol $\xi^2+\eta^2$ and inverse with symbol
\begin{equation}
\label{inverseBasicLap}
 \frac{1}{\eta^2+\xi^2}
   = \frac{1}{(\eta+i|\xi|)(\eta-i|\xi|)}.
\end{equation}
When operating on a distribution
 with compact support on $\rho=0$, $f\in
\lre'(\mathbb{R}) $, we have
 for $\rho>0$
\begin{equation*}
\int \frac{\widehat{ f}(\xi)}
{(\eta+i|\xi|)(\eta-i|\xi|)} 
   e^{ix\xi}e^{i\rho\eta}
   d\xi d\eta 
    = \pi \int \frac{\widehat{ f}(\xi)}
    {|\xi|} 
    e^{ix\xi}e^{-\rho|\xi|}
    d\xi,
\end{equation*}
ignoring the singularity at $\xi=0$ (we discuss
 this in detail in Section \ref{secPseudohalf}).  Thus,
the inverse to the Laplacian 
acting on such a distribution,
 $f(x)$, has the same behavior as the operator with 
  symbol given by
\begin{equation*}
 \frac{1}
 {2|\xi|} \frac{1}
 {(\eta-i|\xi|)}.
\end{equation*}
The second factor is seen to have the transmission 
 property, whereas it was obvious from the beginning
the symbol in \eqref{inverseBasicLap} satisfies our
 condition that the poles ($\pm i|\xi|$) are elliptic
  operators in the $\partial_x$ direction,
   ignoring the singularity at
 $\xi=0$ which can be handled by multiplying with
functions which vanish identically in a neighborhood 
 of $\xi=0$.

We mention here {that} another approach to 
 boundary value problems 
 is outlined in \cite{Tr}. The approach there
is to factor an elliptic equation with each factor 
containing a normal derivative and a tangential
(pseudodifferential) operator.  
 This allows for some simplifications, in particular,
  in the calculation of the Dirichlet to Neumann
   operators.  However, the factorization approach 
 is not easily generalized to intersection domains,
which we take up in later sections of the article. 

It should be noted that we are not interested in 
 developing a full calculus for solving
boundary value problems on smoothly 
 bounded domains, as in \cite{BdM71}, as 
 the intended use of the article for the author is 
  the application of reduction to the boundary 
techniques to boundary 
 value problems which reduce to non-elliptic
  boundary conditions, as in \cite{Hor66}
   (see also \cite{CNS92} for a discussion of 
the \dbar-Neumann problem,
 the type of problem for which the analysis described 
 here is intended).  
{Thus we do not investigate invertibility or Fredholm properties of operators, 
 but rather our focus is to examine operators which arise in 
 a reduction to the boundary.

The main goal of this article is the investigation of 
 operators which arise in the reduction to the boundary techniques 
in the case of intersection domains.  As we shall see many of the 
 operators and the proofs of their properties can be derived from 
the smooth case, while some operators (the $\lre_{-\alpha}^{jk}$ operators
 in Section \ref{secInterHalf}) have no analogues in the smooth
  case.  We setup the required machinery here
   so that, for instance, properties of a solution can be
 obtained from a 
 microlocal analysis on the boundary.
We provide an example calculation in Section \ref{secEx},
 and the full force of the results outlined here will be
seen in \cite{Eh18_pwSmth}, where this exact scenario is to 
 be played out.  
}
 
{ 
The properties of operators we derive in this 
 article are aimed at a description of the regularity
of solutions to boundary value problems in terms of 
 Sobolev spaces.  In the smooth case, these are
classic results (see the above mentioned references 
 handling the smooth case).  In the case of intersections
 (in fact, on general Lipchitz domains),
regularity results have been obtained
by applying 
 layer potentials and singular integral operator
  theory for
solutions to elliptic boundary values problems, as in
the work of Jerison and Kenig
\cite{JK95}, and of Verchota \cite{Ve84}.  Thus, for instance, 
 from \cite{JK95} (see also \cite{Sh05} and
 \cite{Ve84}), 
\begin{thrm}
	Let $\Omega$ be a bounded domain with
Lipschitz boundary.  Let $P$ denote the
 Poisson operator attached to $\Omega$,
with the property
 $P(u_b) \rightarrow u_b$ almost 
everywhere, where the limits are taken
non-tangentially. 

For 
 $u_b\in W^{s}(\partial\Omega)$,
for $0\le s\le 1$,
we have
\begin{equation*}
 \| P(u_b) \|_{W^{s+1/2}(\Omega)}
  \lesssim \|u_b\|_{W^s(\Omega)}.
\end{equation*}
\end{thrm}

Our analysis in this article allows us to 
(partially) reproduce the 
 results in the above theorem (in the case
  of intersection domains), but 
also with some additional information.  
 The spaces with which we work are weighted Sobolev spaces. 
Let $\Omega \subset \mathbb{R}^n$,
  $\Omega = \cap_{j =1}^m \Omega_j$, where
 each $\Omega_j$ is a smoothly bounded domain,
and $j\le n$.  We also let 
 $\rho_j $ be the defining function for 
$\Omega_j$, and 
$\rho = \rho_1 \rho_2 \cdots \rho_m$. 

We denote by
\begin{equation*}
W^{\alpha,s}(\Omega,\rho)
 := \{ f\in W^{\alpha}(\Omega) | \rho^{r} f \in W^{\alpha+r}(\Omega), \forall\ 0\le r \le s \}
\end{equation*}
 with integer $s$ and norm
\begin{equation*} 
 \| f\|_{W^{\alpha,s}(\Omega,\rho)} 
   = \sum_{r=0}^s \| \rho^r f \|_{W^{\alpha+r}(\Omega)}.
\end{equation*}

A consequence of the techniques developed here, we
 can show
\begin{thrm}
Let $\Omega = \cap_{j =1}^m \Omega_j$ as above. 
 Let $P$ denote the Poisson operator for
$\Omega$.  Let $0 \le \alpha<1/2$, and 
 $u_{bj} \in W^{\alpha,s}(\partial\Omega_j)$.
   With $u_b \in L^2(\partial\Omega)$ with
$u_b|_{\partial\Omega_j} = u_{bj}$, we have
\begin{equation*}
\left\|
P(u_b) \right\|_{W^{\alpha+1/2,s}
	\left( \Omega, \rho
	\right)}
\lesssim \sum_j \left\|
u_{bj} 
\right\|_{W^{\alpha,s}
	\left(
	\partial\Omega_j\cap \partial\Omega,
	\rho_{\hat{j}} 
	\right) },
\end{equation*}
where $\rho_{\hat{j}}: = \rho_1 \cdots \rho_{j-1} \rho_{j+1}\cdots \rho_m$.

\end{thrm}
This is a special case of what is
 proved in \cite{Eh18_pwSmth}.
We emphasize here that with our techniques we not only 
 obtain the known $1/2$ gain of regularity, we also 
obtain information on the degree of singularity of 
the solution, on how the regularity or singularity is 
affected upon applying derivatives.  This is a subtle issue
 in the case of the \dbar-Neumann problem (see
 \cite{Eh18_pwSmth}  and \cite{EhLi12}).

}
 
{
 Section \ref{secPseudohalf} is somewhat expository,
  reviewing some of the classical results of 
 psuedodifferential operators arising from boundary value problems
on smooth domains.  There are many 
 works treating the smooth case and we refer the 
 interested reader to \cite{BdM71}, \cite{BrSh}, \cite{Es}, \cite{Gru},
 \cite{Hor66}, \cite{Schu}, and \cite{Tr}
 for detailed treatments of 
  boundary value problems on smooth domains.
\cite{Schu} in addition handles some
 non-smooth cases.  
We provide our own proofs of the operators which
we study and which arise in
 boundary value problems, which will,
in addition to providing the above mentioned
simplifications, serve as a preparation for the operators
 which are to arise in the case of intersections, in some cases
serving as base cases to which operators in the intersection case
 will be reduced.}

 In Section \ref{secInterHalf}, 
the case of (transversal) intersections of domains is
  considered, with the main results
relating to extending estimates 
 obtained in the case of smooth domains via 
weighted estimates.  As far as the author is aware,
 the technique of reducing to the boundary in the
  case of intersection domains has not been extensively
studied.  One reason is certainly some of the troubling
 operators which mix distributions of the 
  different boundaries, which do not behave as 
pseudodifferential operators.  Nonetheless, several
 pseudodifferential operators do arise, and weighted
Sobolev spaces are defined and then those operators are
 studied with respect to the weighted spaces.  
The above-mentioned boundary value operators which 
 are not pseudodifferential operators are also 
  studied with respect to the weighted spaces.  

In Section \ref{secEx} we illustrate how the
 various operators presented arise in the 
  technique of reduction to the boundary 
on intersection domains, in the simple case of
 an intersection of two smoothly bounded domains. 
We focus both on the boundary value operators which
 arise as well as those conditions which 
determine the 
  ellipticity of the boundary conditions 
 (with elliptic highest order operators).
The application of the methods and
properties of Section \ref{secInterHalf}
 to the \dbar-Neumann problem, whose boundary
  conditions reduce to non-elliptic problems 
is the subject of a current study of the author.

With the use of partitions of unity and cutoffs
 (in a neighborhood of a boundary 
 point of a smooth domain), 
 we can
assume coordinates $(x,\rho)\in \mathbb{R}^{n+1}$ for $\rho<0$ and
 thus reduce the study to the case of 
operators acting on distributions supported in the
 lower-half {space} or distributions supported in 
  $\mathbb{R}^n$.  In the case of intersections, the
 coordinates will be chosen so that near a point 
on the intersection of several boundaries, the domain
 looks like the intersection of several 
lower-half {spaces}.

The author gives warm thanks to S\"{o}nmez \c{S}ahuto\u{g}lu
 for helpful comments and suggestions on drafts of this
work.  {The author also acknowledges and gives thanks for the 
 many helpful comments from the referee. }

\section{Analysis on the lower half-space}
\label{secPseudohalf} 
In this section we develop some of the
properties of pseudodifferential operators on half-spaces.  Of
particular importance for the reduction to the boundary techniques
are the boundary values of pseudodifferential operators on
half-spaces, pseudodifferential operators acting on
distributions supported on the boundary, as well as
 pseudodifferential operators on the boundary itself.
 
We briefly recall the definition of pseudodifferential
operators in $\mathbb{R}^n$.  We refer to the book
by Treves, \cite{Tr}, for a thorough introduction to the subject.
Let $\mathcal{S}^m(\mathbb{R}^n)$ denote the 
space of {\it symbols} in $\mathbb{R}^n$.  
A symbol, $a(x,\xi)\in \mathcal{S}^m(\mathbb{R}^n)$
is a
$C^{\infty}(\mathbb{R}^n \times
\mathbb{R}^n)$ function with estimates
on any compact $K \subset \mathbb{R}^n$
\begin{equation}
\label{defnPsdo}
|  \partial_{\xi}^{\alpha}\partial_x^{\beta}
a(x,\xi)| \le c_{\alpha,\beta}(K) 
(1+|\xi|)^{m-|\alpha|} \qquad 
\forall x\in K, \xi\in \mathbb{R}^n,
\end{equation}
where $c_{\alpha,\beta}(K) > 0$.
Note that, for a multi-index, $\beta=(\beta_1, \beta_2,\ldots,\beta_n)$, 
 we make use of the index notation
\begin{equation*}
\partial_x^{\beta} 
= \partial_{x_1}^{\beta_1}
\cdots \partial_{x_n}^{\beta_n}
\end{equation*}
for both $x$ and $\xi$ derivatives.

A pseudodifferential operator
of class $\Psi^m(\mathbb{R}^n)$ is defined in 
terms of a symbol of class $\mathcal{S}^m(\mathbb{R}^n)$.
A pseudodifferential operator, 
$A\in\Psi^m(\mathbb{R}^n)$, can be written as
\begin{equation*}
Af = \frac{1}{(2\pi)^n}
\int a(x,\xi) \widehat{f}(\xi) e^{ix\xi} d\xi.
\end{equation*}
We write $A=Op(a)$.
Generally, we will consider $f$ to be a function
in a Sobolev space, $W^s(\mathbb{R}^n)$, defined as 
the space of functions
such that 
$(1+|\xi|)^{s/2}|\hat{f}(\xi)| \in L^2(\mathbb{R}^n)$.
We have that $A:W^{s}(\mathbb{R}^n) 
\rightarrow W^{s-m}(\mathbb{R}^n)$
as an operator between Sobolev spaces.

An {\it elliptic} operator is a particular type
of pseudodifferential operator whose symbol,
 $a(x,\xi)$, is such that there exist
positive functions $c(x)$ and $r(x)$, and
for each $x$
\begin{equation}
\label{defnEll}
c(x) |\xi|^m \le |a(x,\xi)| 
\qquad \forall\ |\xi| \ge r(x)
\end{equation}
(for $a(x,\xi)\in \mathcal{S}^m
(\mathbb{R}^n)$).
Then we say $Op(a)$ is an elliptic operator,
see \cite{Tr}.

We now fix some notation to be used
throughout this paper.
 We will reserve the notation, $A_k$, to refer to
some pseudodifferential operator of order $k$;
{ thus by $A_k$, we mean $A_k\in \Psi^k(\mathbb{R}^n)$}
(or in the space of operators defined on a 
given domain, $\Omega$, depending on context).
We will even allow the specific operator
referred to by $A_k$ to change from one line 
to the next, {or within the same line itself}.  Thus for instance
we can write $A_0 \circ A_1 = A_1$.

We use coordinates $(x,\rho)$ on $\mathbb{R}^{n+1}$
with $x = (x_1,\ldots, x_n)\in \mathbb{R}^n$.
The full Fourier Transform of a 
function, $f(x,\rho)$, will be written
\begin{equation*}
\widehat{f}(\xi,\eta) = 
\frac{1}{(2\pi)^{n+1}} \int
f(x,\rho) e^{-ix\xi}e^{-i\rho\eta} dx d\rho
\end{equation*}
where $\xi=(\xi_1,\ldots,\xi_n)$.  For $f$ defined on 
a subset of $\mathbb{R}^{n+1}$ we define its Fourier
Transform as the transform of the function extended
by zero to all of $\mathbb{R}^{n+1}$.  Thus,
for instance for $f$ defined on 
$\{(x,\rho)\in
\mathbb{R}^{n+1}: \rho<0\}$, we write
\begin{equation*}
\widehat{f}(\xi,\eta) = 
\frac{1}{(2\pi)^{n+1}} 
\int_{-\infty}^0 \int_{\mathbb{R}^n}
f(x,\rho) e^{-ix\xi}e^{-i\rho\eta} dx d\rho.
\end{equation*}

A partial Fourier Transform in the $x$ variables
will be denoted by
\begin{equation*}
\widetilde{f}(\xi,\rho) = 
\frac{1}{(2\pi)^{n}} \int_{\mathbb{R}^n}
f(x,\rho) e^{-ix\xi} dx .
\end{equation*}

We define the half-space $\mathbb{H}_-^{n+1}:= \{(x,\rho)\in
\mathbb{R}^{n+1}: \rho<0\}$.
The space of distributions,
$\mathscr{E}'(\overline{\mathbb{H}}^{n+1}_-)$ is defined as the
compactly supported distributions in $\mathscr{E}'(\mathbb{R}^{n+1})$
with support in $\overline{\mathbb{H}}^{n+1}_-$.  The topology
of $\mathscr{E}'(\overline{\mathbb{H}}^{n+1}_-)$ is inherited
from that of $\mathscr{E}'(\mathbb{R}^{n+1})$.
We endow $C^{\infty}(\mathbb{R}^{n+1})$ with the topology defined
in terms of the semi-norms
\begin{equation*}
p_{l,K}(\phi)=\max_{(x,\rho)\in K \subset\subset\mathbb{R}^{m+1}}
\sum_{|\alpha|\le l} |\partial^{\alpha} \phi(x,\rho)|.
\end{equation*}
A {\it regularizing} operator,
 $A \in \Psi^{-\infty}(\mathbb{R}^{n+1})$,
is a continuous
linear map
\begin{equation}
\label{regop}
A:
\mathscr{E}'(\mathbb{R}^{n+1}) \rightarrow
C^{\infty}(\mathbb{R}^{n+1}).
\end{equation}
{We will use the term {\it smoothing}
 to describe restrictions of operators 
as long as smoothness and continuity properties are
 exhibited.  For instance, we say an operator
$A:
\mathscr{E}'(\mathbb{R}^{n})\times \delta(\rho) \rightarrow
C^{\infty}(\mathbb{R}^{n+1})$  
is smoothing on $\mathscr{E}'(\mathbb{R}^{n})\times \delta(\rho)$
 if it is a continuous and 
 linear.  Note that in this case it may not be
 true that $A\in \Psi^{-\infty}(\mathbb{R}^{n+1})$. }

In working with pseudodifferential operators on half-spaces, with
coordinates $(x_1,\ldots, x_n, \rho)$, $\rho<0$, we
can 
show that by multiplying symbols 
 by smooth cutoffs with compact support, which are
functions of transform variables corresponding to tangential
coordinates, we produce operators which are smoothing.
This is in analogy to the case of $\mathbb{R}^n$ where
cutoffs (with compact support) in transform space
give rise to regularizing operators:
let $a(x,\xi) \in \mathcal{S}^k(\mathbb{R}^n)$,
and 
$\chi(\xi) \in C^{\infty}_0(\mathbb{R}^n)$.
Then the operator with 
symbol $\chi(\xi) a(x,\xi)$ is regularizing.  
The reason behind this is that any growth
in the $\xi$ variables resulting from differentiation
is compensated by the compact support of 
$\chi(\xi)$.  For instance,
for $x$ in a compact subset $K$,
\begin{align*}
\left| 
\partial_x^{\alpha}
\int a(x,\xi) \chi(\xi) \hat{\phi}(\xi)
e^{ix\xi} d\xi
\right|
\lesssim& \sum_{\alpha_1 + \alpha_2 = \alpha}
\int    \left|\partial_x^{\alpha_1} a(x,\xi)
\chi(\xi)   \xi^{\alpha_2}
\hat{\phi}(\xi)    \right|d\xi
\\
\lesssim& 
\int
|\chi(\xi)| (1+|\xi|)^{|\alpha|+k}
| \hat{\phi}(\xi)|
d\xi\\
\lesssim& \|\phi\|_{L^2(\mathbb{R}^n)}  
\end{align*}
for all {$\alpha = (\alpha_1, \ldots, \alpha_n$) with} $\alpha_i \ge 0$,
where the constants of inequality depend only
on the compact set $K$.

\begin{lemma}
	\label{symbolcut}
	Let $A\in \Psi^{-k}(\mathbb{R}^{n+1})$,
	for $k\ge 1$ an integer, be such that the symbol,
	$\sigma(A)(x,\rho,\xi,\eta)$, 
	is meromorphic (in $\eta$) with poles at 
	\begin{equation*}
	\eta=q_1(x,\rho,\xi), \ldots, q_k(x,\rho,\xi)
	\end{equation*}	
	with $q_i(x,\rho,\xi)$ themselves symbols of 
	pseudodifferential operators of
	order 1 (restricted to $\eta=0$)
	such that for each 
	$\rho$, $\mbox{Res}_{\eta=q_i}
	\sigma(A) \in \mathcal{S}^{-k+1}
	(\mathbb{R}^n)$ with symbol estimates
	uniform in the $\rho$ parameter.

	Let $A_{\chi}$ denote the operator with symbol
	\begin{equation*}
	\chi(\xi)\sigma(A),
	\end{equation*}
	where $\chi(\xi)\in C_0^{\infty}(\mathbb{R}^{n})$.  Then
	$A_{\chi}$
	is smoothing on distributions supported on the boundary:
	\begin{equation*}
	A_{\chi}:
	\mathscr{E}'(\mathbb{R}^{n}) \times \delta(\rho)
	\rightarrow C^{\infty}(\overline{\mathbb{H}}_-^{n+1}),
	\end{equation*}
where $\delta$ is the Dirac-delta distribution.
\end{lemma}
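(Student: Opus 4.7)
The plan is to reduce $A_\chi(u\otimes\delta(\rho))$ to an explicit integral by using that the Fourier transform of $\delta(\rho)$ is a constant in $\eta$, and then to evaluate the $\eta$-integral by contour deformation. Since the tangential cutoff $\chi$ restricts the resulting $\xi$-integral to a compact set, the outcome will be manifestly smooth up to and including the boundary $\rho=0$.

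Concretely, in the paper's conventions the partial Fourier transform of $u(x)\otimes\delta(\rho)$ equals a constant multiple of $\tilde u(\xi)$, independent of $\eta$. Hence one can write
\begin{equation*}
A_\chi(u\otimes\delta)(x,\rho) = c\int\!\!\int e^{-i(x\xi+\rho\eta)}\,\chi(\xi)\,\sigma(A)(x,\rho,\xi,\eta)\,\tilde u(\xi)\,d\eta\,d\xi .
\end{equation*}
For fixed $\rho<0$, I would evaluate the inner $\eta$-integral by closing the contour in the half-plane in which $e^{-i\rho\eta}$ decays. The hypothesis $k\le -1$ gives $|\sigma(A)|\lesssim (1+|\xi|+|\eta|)^{-1}$, which together with the exponential factor is enough to apply the residue theorem. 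The outcome is schematically
\begin{equation*}
A_\chi(u\otimes\delta)(x,\rho) = c'\sum_i\int e^{-ix\xi}\,e^{-i\rho q_i(x,\rho,\xi)}\,\chi(\xi)\bigl[\mathrm{Res}_{\eta=q_i}\sigma(A)\bigr]\tilde u(\xi)\,d\xi ,
\end{equation*}
summed over the poles $q_i$ retained by the contour choice.

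I would then verify smoothness on $\overline{\mathbb{H}}_-^{n+1}$ by differentiating under the integral sign. Because $\chi$ is compactly supported, the $\xi$-integral ranges over a compact set on which the residues -- classical symbols of order $k+1$ with estimates uniform in $\rho$, by hypothesis -- are smooth and bounded together with arbitrarily many $\xi$-derivatives; the $q_i$ are bounded, so $e^{-i\rho q_i}$ is bounded by $1$ for $\rho\le 0$ (the retained poles have $\mathrm{Im}\,q_i\ge 0$), and its $(x,\rho)$-derivatives produce only bounded polynomial prefactors; and $\tilde u$ is smooth of polynomial growth, hence bounded on $\mathrm{supp}\,\chi$. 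Carrying these bounds through the differentiations yields both pointwise smoothness up to $\rho=0$ and the continuity of the map $\mathscr{E}'(\mathbb{R}^n)\to C^\infty(\overline{\mathbb{H}}_-^{n+1})$ via the Sobolev-type estimates described earlier in the section.

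The principal technical obstacle is the rigorous justification of the contour deformation: one needs the meromorphic symbol to admit the required analytic continuation in $\eta$ away from the poles, the $q_i(x,\rho,\xi)$ to stay off the real axis on $\mathrm{supp}\,\chi$, and the decay of $\sigma(A)$ as $|\eta|\to\infty$ to be uniform in $(x,\rho)$ and in $\xi\in\mathrm{supp}\,\chi$. Once that reduction is carried out, the remaining smoothness verification is a routine differentiation-under-the-integral-sign argument on a compact $\xi$-domain.
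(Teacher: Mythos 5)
Your proposal is correct and follows essentially the same route as the paper's proof: deform the $\eta$-contour to pick up residues at the poles $q_i$, observe that the surviving exponential factor $e^{\pm i\rho q_i}$ is bounded on $\overline{\mathbb{H}}_-^{n+1}$, then differentiate under the integral sign, with the compact support of $\chi$ making every $\xi$-integral trivially convergent. The only organizational difference is that you perform the residue calculus once and then treat all $(x,\rho)$-derivatives uniformly, whereas the paper first dispatches the pure $x$-derivatives directly (via Cauchy--Schwarz and the $\mathcal{S}^k$, $k\le-1$, decay in $\eta$, without contour integration) and reserves the residue argument for the $\rho$-derivatives; the substance is identical, and the technical caveats you flag (poles off the real axis on $\mathrm{supp}\,\chi$, uniform decay of the symbol) are exactly what the paper's ``without loss of generality $A$ is the inverse of an elliptic operator with non-vanishing symbol'' reduction addresses.
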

\begin{proof}
	Without loss of generality we suppose $\phi_b(x)\in
	L^2_c(\mathbb{R}^{n})$, and let $\phi=\phi_b\times\delta(\rho)$.
	We estimate derivatives of
	$ A_{\chi}(\phi)$.  We let $a(x,\rho,\xi,\eta)$ denote the symbol of
	$A$, and $a_{\chi}(x,\rho,\xi,\eta)$ that of $A_{\chi}$.
	
We use induction on the 
(absolute value of the) order, $k$, of the 
operator.  The base case $k=1$ will follow from the
calculations of the induction step.  For a given
$k$, we thus assume the Lemma has been proven for
operators in $\Psi^{-1}(\mathbb{R}^{n+1})$,
$ \ldots$, $\Psi^{-(k-1)}(\mathbb{R}^{n+1})$.

	We first note that
	derivatives with respect to the $x$ variables
	pose no difficulty due to the $\chi$ term in the symbol of
	$A_{\chi}$:
	from
	\begin{align*}
	A_{\chi} \phi=&
	\frac{1}{(2\pi)^{n+1}}\int
	a_{\chi}(x,\rho,\xi,\eta)
	\widehat{\phi}(\xi,\eta) e^{ix\xi}e^{i\rho\eta}
	d\xi d\eta\\
	=&
	\frac{1}{(2\pi)^{n+1}} \int
	a_{\chi}(x,\rho,\xi,\eta)
	\widetilde{\phi}_b(\xi) e^{ix\xi}e^{i\rho\eta}
	d\xi d\eta,
	\end{align*}
	we calculate over $(x,\rho) \in K \subset\subset 
	\mathbb{R}^{n+1}$
	\begin{align*}
	\left| \partial^{\alpha}_x
	A_{\chi} \phi\right| &\lesssim
	\sum_{\alpha_1+\alpha_2 = \alpha}
	\int
	| \partial^{\alpha_1}_x a_{\chi}(x,\rho,\xi,\eta)|
	|\xi|^{|\alpha_2|}\left|\widetilde{\phi}_b(\xi)\right|
	d\xi d\eta\\
	&\lesssim \|\phi_b\|_{L^2} 
	\sum_{\alpha_1+\alpha_2 = \alpha} \left( \int
	|\xi|^{|2\alpha_2|}\chi^2(\xi) | \partial^{\alpha_1}_x
	a(x,\rho,\xi,\eta)|^2
	d\xi d\eta\right)^{1/2}\\
	&\lesssim \|\phi_b\|_{L^2} \left( \int
	|\xi|^{|2\alpha|}\chi^2(\xi)
	\frac{1}{(1+\xi^2+\eta^2)^{
			k}}
	d\xi d\eta\right)^{1/2}\\
	&\lesssim \|\phi_b\|_{L^2} \left( \int
	|\xi|^{|2\alpha|}\chi^2(\xi)
	d\xi\right)^{1/2} \\
	&\lesssim
	\|\phi_b\|_{L^2}
	,
	\end{align*}
	where the constants of inequalities depend
	on the compact, $K$.   For any mixed derivative
	$\partial_x^{\alpha}\partial_{\rho}^{\beta}$, the $x$ derivatives
	can be handled in the manner above and so we turn to derivatives
	of the type $\partial^{\beta}_{\rho} A_{\chi} \phi $.

	We use the residue calculus to integrate over the $\eta$ variable
	in
	\begin{equation*}
	A_{\chi} \phi=
	\frac{1}{(2\pi)^{n+1}}\int
	a_{\chi}(x,\rho,\xi,\eta)
	\widetilde{\phi}_b(\xi) e^{ix\xi}e^{i\rho\eta}
	d\xi d\eta.
	\end{equation*}
	Denote the poles of 
	$a(x,\rho,\xi,\eta)$ which are in the lower half-{space}
	($\mbox{Im }\eta <0$) by
	\begin{equation*}
	\eta = q_1^-(x,\rho,\xi), \ldots,
	q_l^-(x,\rho,\xi),
	\end{equation*}
	where $l\le k$.
	Let
	\begin{equation*}
	a_{q_j^-}(x,\rho,\xi) =  i
	\mbox{Res}_{\eta=q_j^-} a_{\chi}(x,\rho,\xi,\eta)
	\end{equation*}
	and
	\begin{equation*}
	A_{\chi}^{q_j^-} \phi=
	\frac{1}{(2\pi)^{n}} \int
	a_{q_j^-}(x,\rho,\xi)
	\widetilde{\phi}_b(\xi) e^{ix\xi}
	e^{i\rho q_j^-}  d\xi.
	\end{equation*}
	
	In particular, we have
	\begin{equation*}
	A_{\chi}\phi 
	=   \sum_{1\le j\le l} A_{\chi}^{q_j^-} \phi,
	\end{equation*}	
modulo terms which are handled by the induction 
 hypothesis, which arise in the case of 
poles of order higher than one (namely, 
 from the resulting $\eta$ derivatives landing
on the $e^{i\rho\eta}$ term).  
Thus estimates (of derivatives) of 
	$A_{\chi}\phi$ will be deduced from estimates of
	$A_{\chi}^{q_j^-} \phi$.  We note {that while} the above sum 
in terms of operators with symbols as the residues of 
 $\sigma(A_{\chi})$ is also valid in the case of poles
  of multiplicity higher than one,
  the hypothesis that the residues are themselves symbols 
 excludes the case in which two poles merge at a point or 
  neighborhood in the domain but are not identical.
	
We can now estimate $\partial^{\beta}_{\rho}
A_{\chi}^{q_j^-} \phi $, for some 
 $j$,
by differentiating under the integral.  
	Note that the factor of $\chi(\xi)$ is contained in 
	$a_{q_j^-}$.  A term
 $\left|\partial_{\rho}^{\beta}\left( a_{q_j^-} e^{i\rho q_j^-}\right)\right|$
   is bounded by a sum of
terms of the form
\begin{equation*}
 s_{\beta_1\beta_2\alpha} =   
 \left|
 	\partial_{\rho}^{\beta_1} a_{q_j^-} 
 \right|
   		(1+|q_j^-|)^{\alpha_0}
   		 \left|(\partial_{\rho}q_j^-)^{\alpha_1}
   		 \cdots
   (\partial_{\rho}^{\beta_2}q_j^-)^{\alpha_{\beta_2}}
   \right|
\end{equation*}
with $\alpha = (\alpha_0,\alpha_1,\ldots,\alpha_{\beta_2})$ and
\begin{equation}
\label{sumCondn}
\begin{aligned}
 & \beta_1 + \beta_2 = \beta,\\
 &\alpha_0 + \sum_{j=1}^{\beta_2} j \cdot \alpha_j = \beta_2.
\end{aligned}
\end{equation}
From the properties of a symbol, we can estimate
\begin{equation*}
 |s_{\beta_1\beta_2\alpha}| 
   \lesssim (1+|\xi|)^{-k+1+\beta_2}.
\end{equation*}
	For $\rho<0$, over $(x,\rho) \in K \subset\subset 
	\mathbb{R}^{n+1}$, we have
	the estimates
	\begin{align*}
	\left|\partial^{\beta}_{\rho} A_{\chi}^{q_-} \phi\right| &\lesssim
	\sum_{\beta_1,\beta_2,\alpha}
	\int 
	s_{\beta_1\beta_2\alpha}
	\left|\widetilde{\phi}_b(\xi)\right|
	d\xi \\
	&\lesssim
	\|\phi_b\|_{L^1}
	,
	\end{align*}
where the summation is over $\alpha,\beta_1,\beta_2$ which satisfy the 
 conditions in \eqref{sumCondn}.
	Again, the integral over $\xi$
	converges due to the factor of $\chi(\xi)$ contained in
	$a_{q_j^-}$.
	
The base case, $k=1$, follows the same calculations 
 above.  
We conclude the
	proof of the lemma.
\end{proof}

The motivation for Lemma \ref{symbolcut} is from the
consideration of inverses to elliptic operators.  We
recall that an elliptic operator, $B\in \Psi^{m}
(\mathbb{R}^{n+1})$, has an inverse, $A \in \Psi^{-m}
(\mathbb{R}^{n+1})$, such that
$B\circ A = A\circ B = I$ modulo
$\Psi^{-\infty}(\mathbb{R}^{n+1})$.  The symbol
of $A$ can be determined using the symbol calculus
and can be written as a sum of terms 
which have powers of $\sigma(B)$ in their denominators.
A standard procedure to deal with possible zeros in the denominator
 is to introduce
cutoffs, $\phi_j(x,\rho,\xi,\eta)$, which vanish 
near the zeros of $\sigma(B)$ are used.  For example,
the symbol of the inverse, $A$,
to the Laplacian operator, $B$, with symbol
$\sigma(B) = \xi^2 + \eta^2$, is given by
$\sigma(A) = \phi(\xi,\eta) /(\xi^2 + \eta^2)$,
where $\phi(\xi,\eta)$ is chosen to be 
$0$ in a small neighborhood of the origin.
The problem in applying Lemma \ref{symbolcut} to
such an inverse, $A$, is that the symbol,
$\sigma(A)$ is no longer meromorphic 
in $\eta$ due to the use of the cutoff,
$\phi(\xi,\eta)$.  {We therefore mention to the reader
 interested in applying our analysis that
a technique which can be applied to the situation
 of Lemma is to add a zero order operator to
$B$ and consider instead the inverse to $B+B_0$,
 where $B_0$ is chosen such that $\sigma(B+B_0)$ does
  not vanish.  This technique has been applied to 
the case of intersection domains \cite{Eh18_pwSmth}.}

 We now prove a lemma relating to the
zeros of $\eta$, denoted $q_i(x,\rho,\xi)$ above.
\begin{lemma}
\label{lemmaZerosEllOp}
 Let $B \in \Psi^m(\mathbb{R}^{n+1})$ be an
elliptic pseudodifferential operator with
 symbol
\begin{equation}
\label{factor}
 \sigma(B) = (\eta - q_1(x,\xi,\rho)) \cdots (\eta - q_m(x,\xi,\rho)),
\end{equation}
a polynomial of order $m$ in the $\eta$ variables.  Then
 the $q_i(x,\rho,\xi)$ are {symbols of} elliptic operators 
 of order one (with $\rho$ as a parameter).
\end{lemma}
\begin{proof}
We fix $x$ to be in some compact set.  We want to show
 that each $q_i$ satisfies 
$|q_i| \simeq |\xi|$ for large $\xi$.

Setting $\eta = 0$ in \eqref{factor}, we know 
from ellipticity
\begin{equation*}
|q_1 \cdot q_2 \cdots q_m| \simeq |\xi|^m.
\end{equation*}

If one $q_i$ (suppose it is $q_1$)
were such that $|\xi| / |q_1| \rightarrow 0$,
then with $r_{m-1}  = q_2 \cdots q_m$,
we would have $r_{m-1}$ is such that
$|r_{m-1}|/|\xi|^{m-1} \rightarrow 0$.
 We will use the notation $r_k$ below to
denote the coefficient of 
 $\eta^{m-1-k}$ in the polynomial
\begin{equation*}
  (\eta - q_2)(\eta - q_3) \cdots (\eta - q_m).
\end{equation*}

Now take $k$ derivatives with respect to
 the $\eta$ variables of the symbol, $\sigma(B)$,
and set $\eta=0$.  We obtain an inequality 
from \eqref{defnPsdo} of
the form
\begin{equation*}
|q_1| \cdot r_{m-1-k} + r_{m-k} \lesssim
|\xi|^{m-k}.
\end{equation*} 

At each step, for $k=1,\ldots, m-1$, we conclude
$|r_{m-1-k}|/|\xi|^{m-1-k} \rightarrow 0$, due to
the known (slow) growth of $r_{m-k}$, and
(fast) growth of $q_1$.

With $m-1$ derivatives, we have
\begin{equation}
 \label{sum}
|q_1 + q_2 + \cdots +q_m| \lesssim |\xi|,
\end{equation}  
but from the growth of
 $r_1$ from above, we
have the property
\begin{equation*}
|q_2+\cdots +q_m|/|\xi| \rightarrow 0.
\end{equation*}
  The 
 relation in \eqref{sum} thus leads to a contradiction
because 
$q_1$ was supposed to have {faster} growth than
$|\xi|$.
\end{proof}

The next theorem is aimed at properties of 
a finite sum
of the first terms of the expansion of
the inverse to an elliptic operator
of order $k\ge 1$.  For such operators,
the
hypotheses of Lemmas \ref{symbolcut} and \ref{lemmaZerosEllOp}
 are satisfied, and 
so the poles $\eta = q_i(x,\rho,\xi)$ are
elliptic (uniformly in the 
$\rho$ parameter) of order 1, as are their
imaginary parts.
For such operators,
without the assumption of the cutoff function
$\chi(\xi)$ in the symbol of $A_{\chi}$ we can still prove
\begin{thrm}
	\label{estinvell}
	Let $g\in {\mathscr{D}'}(\mathbb{R}^{n+1})$ of
	the
	form $g(x,\rho) =g_b(x) \delta(\rho)$;
	$g_b$ is a distribution supported on 
$\partial\mathbb{H}^{n+1}_- = \mathbb{R}^n$.  
	Let $A\in \Psi^{k}(\mathbb{R}^{n+1})$, 
	$k\le -1$ be as in Lemma \ref{symbolcut}
	with the additional assumption that
$\sigma(A)(x,\rho,\xi,\eta)$ vanishes in a neighborhood
 of $\xi = 0$ and 
	the imaginary parts of the poles,
	$q_i(x,\rho,\xi)$ are symbols of elliptic operators
	(restricted to $\eta=0$) of order 1.  Then
	for all integer $s\ge 0$,
and $g_b \in W^{s+k+1/2}(\mathbb{R}^n)$,
	\begin{equation*}
	\|\varphi A g\|_{W^s(\mathbb{H}^{n+1}_-)} 
\lesssim \|g_b\|_{W^{s+k+1/2}(\mathbb{R}^{n})}
	\end{equation*}
for any $\varphi\in C^{\infty}_0
 (\overline{\mathbb{H}}^{n+1}_-)$.	
The estimate also holds for all (non-integer)
 $s\ge |k|-1$.
\end{thrm}
\begin{proof}
	We follow and use the notation of the proof of Lemma
	\ref{symbolcut}, and
prove by induction, assuming the Theorem holds for
 operators in $\Psi^{-j}(\mathbb{R}^{n+1})$,
  for $j=1,\ldots, k-1$.

We again
	analyze a typical term resulting from a pole at
	$\eta=q_j^-(x,\rho,\xi)$
 of $a(x,\rho,\xi,\eta)$.   With
	\begin{equation*}
	a_{q_j^-}(x,\rho,\xi) = i
	\mbox{Res}_{\eta=q_j^-} a(x,\rho,\xi,\eta)
	\end{equation*}
	and
	\begin{equation*}
	A^{q_j^-} g=
	\frac{1}{(2\pi)^{n}} \int
	a_{q_j^-}(x,\rho,\xi)
	\widetilde{g}_b(\xi) e^{ix\xi}
	e^{i\rho q_j^-}  d\xi,
	\end{equation*}
	we estimate $\partial^{\alpha}_x \partial^{\beta}_{\rho} A^{q_j^-} g
	$
	in the case of integer $s\ge 0$ and
	$|\alpha| + \beta =s$ 
	by differentiating under the integral.
As in the proof of Lemma \ref{symbolcut}, we bound 
 $\partial^{\alpha}_x \partial^{\beta}_{\rho}
  \left( a_{q_j^-} e^{ix\xi}
  e^{i\rho q_j^-} \right)$ by a sum of terms
\begin{equation*}
s_{\alpha\beta\gamma} = 
(1+|\xi|)^{\alpha_1}  
\left|
\partial_x^{\alpha_2}\partial_{\rho}^{\beta_1}
  a_{q_j^-} 
\right|
(1+|q_j^-|)^{\gamma_0} \left|(D_{x,\rho}q_j^-)^{\gamma_1}
\cdots
 (D_{x,\rho}^{\alpha_3+\beta_2}
 q_j^-)^{\gamma_{\alpha_3+\beta_2}}
\right|
\end{equation*}
with
 $D_{x,\rho}^j$ a derivative of the form
  $\partial_{\rho}^i \partial_x^l$ for $i + |l| = j$,
   $\alpha = (\alpha_1,\alpha_2,\alpha_3)$,
   $\beta = (\beta_1,\beta_2)$,
 $\gamma = (\gamma_0,\gamma_1,\ldots,\gamma_{\alpha_3+\beta_2})$ and
\begin{equation}
\begin{aligned}
& \alpha_1+ \alpha_2+ \alpha_3 = \alpha,\\
& \beta_1 + \beta_2 = \beta,\\
&\gamma_0 + \sum_{j=1}^{\alpha_3+\beta_2} j \cdot \gamma_j = \alpha_3+\beta_2.
\end{aligned}
\end{equation}
We note the estimates
\begin{align*}
 | s_{\alpha\beta\gamma} | 
  \lesssim& (1+|\xi|)^{\alpha_1+k+1+\alpha_3+\beta_2}\\
  \lesssim& (1+|\xi|)^{k+1+s}.
\end{align*}

	For $\rho<0$ we thus have
	the estimates
	\begin{equation*}
	\left|\partial_x^{\alpha} \partial^{\beta}_{\rho}
	A^{q_j^-} g\right|^2 \lesssim
	\int
(1+|\xi|^2)^{k+1+s}\left|\widetilde{g}_b(\xi)\right|^2
	e^{2\rho |\mbox{Im }q_j^-|}
	d\xi
	,
	\end{equation*}
	where as in Lemma \ref{symbolcut}, the estimates are
	over compact sets, $K$, $(x,\rho) \in K \subset\subset 
	\mathbb{R}^{n+1}$, with the constants of the inequality
	depending on $K$.
	
	Since  $\mbox{Im } q_j^-$ 
	is assumed to be the symbol of an 
	order 1 elliptic operator,
	we have
	$ |\xi| \lesssim | \mbox{Im } q_j^-|
	\lesssim (1 + |\xi|)$ using the inequalities
	\eqref{defnPsdo}, \eqref{defnEll}
	(for $x$ restricted to a compact set, and the
	constants of inequalities depending on that set).
	Thus,
	we have the property
	\begin{equation*}
	\frac{1}{|\mbox{Im }q_j^-|}\sim \frac{1}{|\xi|}.
	\end{equation*}
	Thus, when we integrate over $\rho$ we get a factor
	on the order 
	of $\frac{1}{|\xi|}$ which lowers the order of 
	the	norm in the tangential directions by 1/2:
	\begin{align*}
	\left\|
	 \varphi \partial_x^{\alpha} \partial^{\beta}_{\rho}
	A^{q_j^-} g \right\|_{L^2}^2 &\lesssim  \int
(1+|\xi|^2)^{k+1+s} \left|\widetilde{g}_b(\xi)\right|^2
	\frac{1}{1+|\xi|}
	d\xi\\
	&\lesssim
	\sum_{\alpha,\beta}
	\int |1+|\xi|^2|^{k+1/2+s}
	\left|\widetilde{g}_b(\xi)\right|^2
	d\xi\\
	&\lesssim \|
	g_b\|^2_{W^{s+k+1/2}(\mathbb{R}^{n})},
	\end{align*}
	Note that we can use the term $1+|\xi|$ in the
	denominator 
by the assumption that $a(x,\rho,\xi,\eta)$ vanishes near
 $\xi= 0$.

The base case $k=-1$ is handled by the calculations
above, and
the rest of the proof, including how to 
incorporate the induction step follows that of
Lemma \ref{symbolcut}.

	This proves the Theorem for integer $s\ge 0$.
	The non-integer case follows by interpolation
\cite{LiMa}.
\end{proof}

{
\begin{remark}
In practice, the assumption of the vanishing of the symbol at
 $\xi=0$ can be removed by the consideration of symbols
  with non-vanishing
 denominators (by modifying the operators with 
  the addition of zero order terms for instance, see the
discussion after Lemma \ref{symbolcut}).
\end{remark}
}

Another version of Theorem \ref{estinvell} is
  contained in 
 \cite{CP} (see Theorem 5.2.4 $iii)$).

The hypotheses of Lemmas \ref{symbolcut} and
 \ref{lemmaZerosEllOp}, and Theorem
\ref{estinvell} are all satisfied for instance in the 
 case of the terms in the expansion of the
inverse to an elliptic differential operator
such as the Laplacian.   

There are analogue estimates for functions with 
support in the half-space (as opposed to support
on the boundary):
\begin{thrm}
\label{thrmSobIntEst}
Let $k\le -1$, $s\ge |k|$, and $f\in W^{s+k} (\mathbb{H}^{n+1}_-)$.
  Let
$A\in \Psi^{k}(\mathbb{R}^{n+1})$ be as in Theorem \ref{estinvell}.  
Then,  
\begin{equation*}
\|\varphi A f\|_{W^s(\mathbb{H}^{n+1}_-)} \lesssim \|
f\|_{W^{s+k}(\mathbb{H}^{n+1}_-)}
\end{equation*}
for any $\varphi\in C^{\infty}_0
(\overline{\mathbb{H}}^{n+1}_-)$.
\end{thrm}
\begin{proof}
{
	We  prove by
 induction on the order of the 
  class $\Psi^k(\mathbb{R}^{n+1})$.

Let $a(x,\rho,\xi,\eta)$ be
 the symbol of the operator $A$: 
\begin{equation*}
 Af=\frac{1}{(2\pi)^{n+1}} \int
 a(x,\rho,\xi,\eta)
 \widehat{f}(\xi,\eta) e^{ix\xi} e^{i\rho\eta}  d\xi d\eta,
\end{equation*}

In the case $k={-1}$ we can write
$a(x,\rho,\xi,\eta) = 
\frac{1}{\eta - q(x,\rho,\xi)}$, and calculate
\begin{align*}
 \partial_{\rho}
   Af=& \frac{i}{(2\pi)^{n+1}} \int
  \eta a(x,\rho,\xi,\eta)
  \widehat{f}(\xi,\eta) e^{ix\xi} e^{i\rho\eta}  d\xi d\eta\\
=& \frac{i}{(2\pi)^{n+1}} \int
(\eta-q(x,\rho,\xi)) a(x,\rho,\xi,\eta)
\widehat{f}(\xi,\eta) e^{ix\xi} e^{i\rho\eta}  d\xi d\eta\\
& \quad + \frac{i}{(2\pi)^{n+1}} \int
a(x,\rho,\xi,\eta)
q(x,\rho,\xi) \widehat{f}(\xi,\eta) e^{ix\xi} e^{i\rho\eta}  d\xi d\eta\\
=& if(x,\rho) +  \frac{i}{(2\pi)^{n+1}} \int
a(x,\rho,\xi,\eta)
q(x,\rho,\xi) \widehat{f}(\xi,\eta) e^{ix\xi} e^{i\rho\eta}  d\xi d\eta.
\end{align*}
In this way, we can relate $\rho$ derivatives to 
 derivatives in the tangential directions. 
The second term on the right can be estimated by
 $\|f\|_{L^2(\mathbb{H}^{n+1}_-)}$.  This can be repeated to 
  show any $\rho$ derivative of order $s$ can be 
estimated by $\|f\|_{W^{s-1}(\mathbb{H}^{n+1}_-)}$.
 Estimates for derivatives with respect to $x$ are handled 
directly as with the second term above and the base case of 
 $k=-1$ is proved.

Lower order operators (for $k{<-1}$) are handled similarly.
 Let $k<-1$.  
  We let
 $q_1, \ldots , q_{|k|}$ 
 denote the poles of
 $a(x,\rho,\xi,\eta)$ (in $
 \eta$)  counted with
 multiplicity.  We have 
\begin{align*}
\partial_{\rho}
Af=& \frac{i}{(2\pi)^{n+1}} \int
\eta a(x,\rho,\xi,\eta)
\widehat{f}(\xi,\eta) e^{ix\xi} e^{i\rho\eta}  d\xi d\eta\\
=& \frac{i}{(2\pi)^{n+1}} \int
(\eta-q_1(x,\rho,\xi)) a(x,\rho,\xi,\eta)
\widehat{f}(\xi,\eta) e^{ix\xi} e^{i\rho\eta}  d\xi d\eta\\
& \quad + \frac{i}{(2\pi)^{n+1}} \int
a(x,\rho,\xi,\eta)
q_1(x,\rho,\xi) \widehat{f}(\xi,\eta) e^{ix\xi} e^{i\rho\eta}  d\xi d\eta.
\end{align*} 
Both terms on the right-hand side involve operators of 
 order $1-|k|$ and so the induction hypothesis
applies to handle higher order derivatives, and the theorem is
 proved for integer $s\ge |k|$.  The general case follows by Sobolev interpolation.
}
\end{proof}

In the case 
$A\in \Psi^k(\mathbb{R}^{n+1})$
for $k\le -1$ without additional 
assumptions on the symbol,
  $\sigma(A)(x,\rho,\xi,\eta)$,
which for instance arises from error terms in a
 symbol expansion, we can still derive 
estimates, up to certain order.
\begin{thrm}
\label{estErrorEllBndry}
Let $g\in {\mathscr{D}'}(\mathbb{R}^{n+1})$ of
	the
	form $g(x,\rho) =g_b(x) \delta(\rho)$ for
$g_b\in W^{-1/2}(R^{n})$.  Let
$A\in \Psi^{k}(\mathbb{R}^{n+1})$, $k\le -1$.  Then for 
 integer $s\le |k|-1$, 
\begin{equation*}
	\|\varphi A g\|_{W^s(\mathbb{H}^{n+1}_-)} \lesssim \|
	g_b\|_{W^{s+k+1/2}(\mathbb{R}^{n})}
\end{equation*}
for any $\varphi\in C^{\infty}_0
(\overline{\mathbb{H}}^{n+1}_-)$.
\end{thrm}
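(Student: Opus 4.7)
The plan is to bound $Ag$ directly via Plancherel. The key observation is that the full Fourier transform of the boundary distribution $g = g_b\otimes\delta$ is $\widetilde{g}_b(\xi)$ (independent of $\eta$), so the $W^s_{loc}(\mathbb{R}^{n+1})$-norm of $Ag$ reduces to an integral against $|a(\xi,\eta)|^2\langle\xi,\eta\rangle^{2s}$, and the restriction $s\leq|k|-1$ will appear as the exact convergence threshold for the resulting $\eta$-integral.

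Assuming $A$ is properly supported (with the general case handled by introducing a cutoff and absorbing the smoothing error), after freezing the $(x,\rho)$-dependence of $a$ on small patches as in the proofs of Theorems \ref{estinvell} and \ref{thrmSobIntEst}, Plancherel together with the symbol estimate $|a|\lesssim\langle\xi,\eta\rangle^k$ gives
\begin{equation*}
\|Ag\|_{W^s_{loc}(\mathbb{R}^{n+1})}^2 \lesssim \int|\widetilde{g}_b(\xi)|^2\Big(\int\langle\xi,\eta\rangle^{2(s+k)}\,d\eta\Big)d\xi.
\end{equation*}
The change of variables $\eta = \langle\xi\rangle u$ reduces the inner integral to $\langle\xi\rangle^{2(s+k)+1}\int(1+u^2)^{s+k}du$, and the $u$-integral converges precisely when $s+k<-\tfrac{1}{2}$, which for non-negative integer $s$ is equivalent to the hypothesis $s\leq|k|-1$. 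Under this condition the bound becomes $\|g_b\|_{W^{s+k+1/2}(\mathbb{R}^n)}^2$, and the trivial inequality $\|\cdot\|_{W^s_{loc}(\mathbb{H}^{n+1}_-)}\leq\|\cdot\|_{W^s_{loc}(\mathbb{R}^{n+1})}$ finishes the integer case; the general case follows by Sobolev interpolation.

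The main obstacle is the convergence of the $\eta$-integral above: this is precisely the obstruction that Theorems \ref{estinvell} and \ref{thrmSobIntEst} avoided via residue calculus and the exponential decay $e^{\rho|\mathrm{Im}\,q|}$ from the elliptic pole structure, and it is the absence of that structure here that forces the ceiling $s\leq|k|-1$.
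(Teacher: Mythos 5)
Your proposal is correct and follows essentially the same route as the paper: both proofs abandon any meromorphic pole structure and use only the crude symbol bound $|a|\lesssim(1+\xi^2+\eta^2)^{k/2}$, observe that the full transform of $g_b\otimes\delta$ is $\widetilde{g}_b(\xi)$, integrate out $\eta$ first, and read off the convergence threshold $s+k<-\tfrac12$ together with the resulting $|\xi|$-power $2(s+k)+1$ that produces the $\tfrac12$-gain. The paper phrases this as an $L^2$ bound on $\partial_x^\alpha\partial_\rho^\beta Ag$ with $|\alpha|+\beta\le s$ and the explicit substitution giving $\xi^{2(|\alpha|+\beta)}/|\xi|^{2|k|-1}$, whereas you phrase it as a Plancherel estimate against $\langle\xi,\eta\rangle^{2s}$; these are the same computation in different notation.

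One small mismatch with the paper: the statement places no lower bound on $s$, and the paper addresses negative $s$ separately by composing with the operator $\Lambda^{-|s|}$ of symbol $(1+\xi^2+\eta^2)^{-|s|/2}$ and applying the positive-$s$ result to $\Lambda^{-|s|}\circ A$. Your closing appeal to Sobolev interpolation only fills in non-integer $s\ge 0$ and does not by itself reach negative $s$. That said, the Plancherel estimate you wrote is valid verbatim for negative $s$ (the $W^s$-norm of a compactly supported distribution is still the $\langle\xi,\eta\rangle^s$-weighted $L^2$-norm of its transform, and the $\eta$-integral converges even more readily), so your argument in fact covers the full range; it is only the final sentence that should be adjusted to say this rather than invoking interpolation.
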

\begin{proof}
	As in the proof of Theorem \ref{estinvell},
we write
\begin{equation*}
A g=
\frac{1}{(2\pi)^{n+1}} \int
a(x,\rho,\xi,\eta)
\widetilde{g}_b(\xi) e^{ix\xi}
e^{i\rho \eta}  d\xi d\eta,
\end{equation*}
and estimate
\begin{align*}
 \left\| \varphi \partial_x^{\alpha} 
\partial_{\rho}^{\beta} A g
 \right\|_{L^2}^2 \lesssim&
\int \frac{\xi^{2|\alpha|}
 \eta^{2\beta}}
 {(1+\eta^2+\xi^2)^{|k|}} 
|\widetilde{g}_b(\xi)|^2 d\eta d\xi\\
\lesssim &
 \int \frac{\xi^{2(|\alpha|+\beta)} }
 {(1+|\xi|)^{2|k|-1}} 
 |\widetilde{g}_b(\xi)|^2  d\xi\\
\lesssim& 
 \| g_b\|_{W^{s+k+1/2}(\mathbb{R}^{n})}^2.
\end{align*}
 This handles the case of $s\ge 0$.
Negative values of $s$ can be handled by
 writing
$\|\varphi A g\|_{W^s(\mathbb{H}^{n+1}_-)} 
 \simeq 
\| \phi \Lambda^{-|s|} \circ   \varphi A g\|_{L^2(\mathbb{H}^{n+1}_-)} $
where
$\phi \in C^{\infty}_0(\overline{\mathbb{H}}^{n+1}_-)$ 
is such that
 $\phi =1 $ on $\mbox{supp}(\varphi)$
 and $\Lambda^{-|s|}$ is a pseudodifferential operator
 with symbol
\begin{equation*}
\sigma\left( \Lambda^{-|s|}\right)
 = \frac{1}{(1+\xi^2+\eta^2)^{|s|/2}},
\end{equation*}
 and then applying the theorem to 
$\Lambda^{-|s|} \circ \varphi A$.
\end{proof}

In the case of a distribution
 supported on the half-space, we have
the following Theorem (see also 
Proposition 3.8 in \cite{BdM71}
 or, for the analogue in the
 case of global regularity,
 Theorem 5.2.5 in \cite{CP}).
\begin{thrm}
	\label{estErrorEllInt}
	Let $f\in L^2(\mathbb{H}^{n+1}_-)$.
  Let $A\in \Psi^{k}(\mathbb{R}^{n+1})$, $k\le -1$.
   Then for $0\le s\le |k|$
\begin{equation*}
	\|\varphi A f\|_{W^s(\mathbb{H}^{n+1}_-)} \lesssim 
	\|f\|_{L^2(\mathbb{H}^{n+1}_-)}
\end{equation*}
for any $\varphi\in C^{\infty}_0
(\overline{\mathbb{H}}^{n+1}_-)$.
\end{thrm}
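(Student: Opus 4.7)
The plan is to apply the same technique used at the end of Theorem \ref{estErrorEllBndry}, namely compose with a pseudodifferential operator $\Lambda^s$ of order $s$ in order to convert the Sobolev-norm estimate into an $L^2$ bound, and then invoke the standard $L^2$-boundedness of pseudodifferential operators of nonpositive order.

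First I would extend $f$ by zero from $\mathbb{H}^{n+1}_-$ to all of $\mathbb{R}^{n+1}$; since $f\in L^2(\mathbb{H}^{n+1}_-)$, the extension lies in $L^2(\mathbb{R}^{n+1})$ with the same norm. For integer $s$ with $0\le s\le |k|$, I would then write
\begin{equation*}
\|A f\|_{W^s_{loc}(\mathbb{H}^{n+1}_-)} \simeq \|\Lambda^s \circ A f\|_{L^2_{loc}(\mathbb{H}^{n+1}_-)},
\end{equation*}
where $\Lambda^s$ is the pseudodifferential operator with symbol
\begin{equation*}
\sigma\left(\Lambda^s\right) = (1+\xi^2+\eta^2)^{s/2}.
\end{equation*}
Since $\Lambda^s\in \Psi^s$ and $A\in \Psi^k$, the composition $\Lambda^s \circ A$ belongs to $\Psi^{s+k}$. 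The hypothesis $0\le s\le |k|$ forces $s+k\le 0$, so $\Lambda^s \circ A$ is of nonpositive order and hence bounded on $L^2(\mathbb{R}^{n+1})$. Thus
\begin{equation*}
\|\Lambda^s \circ A f\|_{L^2_{loc}(\mathbb{H}^{n+1}_-)} \le \|\Lambda^s \circ A f\|_{L^2(\mathbb{R}^{n+1})} \lesssim \|f\|_{L^2(\mathbb{R}^{n+1})} = \|f\|_{L^2(\mathbb{H}^{n+1}_-)},
\end{equation*}
giving the theorem for integer $s$. The non-integer case then follows by Sobolev interpolation.

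The main technical input is the standard $L^2$-boundedness of pseudodifferential operators of nonpositive order; with it, no direct residue analysis at the poles $q_i$ of $\sigma(A)$ is needed, since $f$ is already a genuine $L^2$ function (not a distribution concentrated on $\{\rho=0\}$) and so there is no $\eta$-contour integration to perform. As a cross-check, one can alternatively differentiate directly under the integral as in Theorem \ref{estErrorEllBndry}: the derivatives $\partial_x^\alpha\partial_\rho^\beta$ produce factors $\xi^\alpha\eta^\beta$ against the symbol bound $|a|\lesssim(1+|\xi|+|\eta|)^{-|k|}$, and for $|\alpha|+\beta\le s\le |k|$ the resulting quotient $(\xi^2+\eta^2)^{|\alpha|+\beta}/(1+\xi^2+\eta^2)^{|k|}$ is uniformly bounded, so Plancherel again yields control by $\|f\|_{L^2}$.
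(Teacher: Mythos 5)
Your proposal is correct, and the ``cross-check'' you describe at the end is in fact exactly the paper's own proof: the paper differentiates under the integral, producing factors $\xi^{\alpha}\eta^{\beta}$ against the symbol bound $|a|\lesssim (\xi^2+\eta^2)^{-|k|/2}$, and then applies Plancherel, noting that for $|\alpha|+\beta\le s\le |k|$ the quotient is bounded. Your primary argument---compose with $\Lambda^s$ so that $\Lambda^s\circ A\in\Psi^{s+k}$ has nonpositive order and is therefore $L^2$-bounded---is an equivalent packaging of the same estimate (it is precisely the device the paper itself uses for negative $s$ in Theorem \ref{estErrorEllBndry}), so the two routes are essentially the same proof; the only thing your main version adds is that the Plancherel computation gets outsourced to the standard $L^2$-boundedness theorem for zero-order operators, at the cost of a small amount of care in identifying $\|A f\|_{W^s_{loc}}$ with $\|\Lambda^s A f\|_{L^2_{loc}}$ (a harmless commutator with the localizing cutoff, of lower order).
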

\begin{proof}
We write
\begin{equation*}
A f=
\frac{1}{(2\pi)^{n+1}} \int
a(x,\rho,\xi,\eta)
\widehat{f}(\xi,\eta) e^{ix\xi}
e^{i\rho \eta}  d\xi d\eta,
\end{equation*}
and estimate
\begin{align*}
\left\| \varphi \partial_x^{\alpha} 
\partial_{\rho}^{\beta} A g
\right\|_{L^2}^2 \lesssim&
\int \frac{\xi^{2|\alpha|}
	\eta^{2\beta}}
{(1+\eta^2+\xi^2)^{|k|}} 
|\widehat{f}(\xi,\eta)|^2 d\eta d\xi\\
\lesssim& 
\|f\|_{L^2(\mathbb{H}^{n+1}_-)}^2.
\end{align*}
\end{proof}

We can combine 
 Theorems
 \ref{estinvell} and \ref{estErrorEllBndry}
 (respectively Theorems \ref{thrmSobIntEst} and
  \ref{estErrorEllInt})
and apply them to operators which can be decomposed
 into an operator satisfying the hypothesis
  of Theorem \ref{estinvell} 
  and a remainder term.

\begin{defn}
\label{defnDecomp}
	We say an operator $B\in \Psi^{-k}(\mathbb{R}^{n+1})$
	for $k\ge 1$ is
	{\it decomposable} if for any $N\ge k$ 
	it can be written in the form 
	\begin{equation*} 
	B = A + A_{-N},
	\end{equation*}
	where $A\in \Psi^{-k}(\mathbb{R}^{n+1})$ is an operator 
	satisfying the hypothesis
	of Theorem \ref{estinvell} .
\end{defn}

We recall the discussion in the 
 Introduction and remark here that for our purposes we 
 could have used the definition of operators with
the transmission property as defined by
 Boutet de Monvel \cite{BdM71}
   as in the applications which are
to appear, all our decomposable 
 operators can be reduced to 
 or can be replaced with such operators
with the transmission property (with perhaps some 
 trivial modifications for the order of the operator).  
This would require however a significant
 amount of explanation and some not so enlightening
calculations to show how they can be reduced to the
 case of Boutet de Monvel.

 As we mentioned in the Introduction, on a smoothly bounded domain,
$\Omega$, we can localize by considering a covering of 
 $\Omega$ so that in each set of the covering there exist
local coordinates, $(x,\rho)$, and then apply the above analysis
 on half-spaces to the domain with (smooth) boundary, $\Omega$.
  We can then define $\Psi^k(\Omega)$ by using local
coordinate charts and defining $\Psi^k(\mathbb{H}_{-}^{n+1})$ as
 the restriction of an operator in $\Psi^k(\mathbb{R}^{n+1})$
  to $\rho<0$.
  
We use the notation $\Psi_b^k(\mathbb{R}^n)$,
respectively $\Psi_b^k(\partial\Omega)$
 in the case of pseudodifferential operators
  on a domain $\Omega\subset \mathbb{R}^{n+1}$,
to denote the space of pseudodifferential
operators of order $k$ on $\mathbb{R}^{n}=\partial
\mathbb{H}^{n+1}_-$, 
respectively $\partial\Omega$.  Further
following our use of the notation $A_k$ to denote any operator
belonging to the family $\Psi^k(\mathbb{H}^{n+1}_-)$ (respectively
$\Psi^{k}(\Omega)$) when acting on distributions
$\phi\in \mathscr{E}'(\mathbb{H}^{n+1}_-)$ (respectively in $\mathscr{E}'(\Omega)$)
we write for 
 $\phi_b\in\mathscr{E}'(\mathbb{R}^{n})$ 
(respectively in $\mathscr{E}'(\partial\Omega)$)
$A_{k,b}\phi_b$, $A_{k,b}$ denoting a
pseudodifferential operator of order $k$ on the appropriate boundary of a domain.

With coordinates $(x_1,\ldots,x_n,\rho)$ in $\mathbb{R}^{n+1}$,
let $R$ denote the restriction operator, $R:
{\mathscr{D}'}(\mathbb{R}^{n+1}) \rightarrow {\mathscr{D}'}(R^{n})$,
given by
$R\phi=\left.\phi\right|_{\rho=0}$.

\begin{lemma}
\label{restrict} 
Let $g\in {\mathscr{D}'}(\mathbb{R}^{n+1})$ of the
	form $g(x,\rho) =g_b(x) \delta(\rho)$ for
	$g_b\in {\mathscr{D}'}(R^{n})$.  Let
	$A\in \Psi^k(\mathbb{R}^{n+1}) $, be an operator of order $k$,
	for $k\le -2$.  
Then $R \circ A$ induces a pseudodifferential
 operator in $\Psi^{k+1}_b(\mathbb{R}^n)$ 
acting on $g_b$ via
\begin{equation*}
 R\circ A g = A_{k+1,b} g_b.
\end{equation*}
\end{lemma}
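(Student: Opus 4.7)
The plan is to compute $(R\circ A)g$ directly from the oscillatory integral representation of $A$ and recognize the result as a pseudodifferential operator in $g_b$ whose symbol is obtained by integrating out the transversal frequency $\eta$. Since $g(x,\rho)=g_b(x)\delta(\rho)$, its full Fourier transform is, up to a multiplicative constant, $\widetilde{g}_b(\xi)$ and in particular independent of $\eta$. Substituting into the formula for $A$ and setting $\rho=0$ therefore gives, up to constants,
\[
(R\circ A)g(x) = \int\!\!\int a(x,0,\xi,\eta)\,\widetilde{g}_b(\xi)\,e^{ix\xi}\,d\eta\,d\xi = \int b(x,\xi)\,\widetilde{g}_b(\xi)\,e^{ix\xi}\,d\xi,
\]
where
\[
b(x,\xi) := \int a(x,0,\xi,\eta)\,d\eta.
\]
The lemma thus reduces to showing that the Fubini step is legitimate and that $b\in \mathcal{S}^{k+1}(\mathbb{R}^n)$.

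Absolute convergence of the $\eta$-integral defining $b$, and hence the Fubini step, follows from the symbol estimate $|a(x,0,\xi,\eta)|\lesssim (1+|\xi|+|\eta|)^k$ together with the hypothesis $k\le -2<-1$, which ensures integrability in $\eta$ at infinity; the $\xi$-integration is handled by the Schwartz decay of $\widetilde{g}_b$.

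To verify the symbol estimates, I differentiate under the integral: for multi-indices $\alpha,\beta$,
\[
\partial_\xi^\alpha \partial_x^\beta b(x,\xi) = \int \partial_\xi^\alpha \partial_x^\beta a(x,0,\xi,\eta)\,d\eta,
\]
with integrand bounded by $C_{\alpha,\beta}(1+|\xi|+|\eta|)^{k-|\alpha|}$. Splitting the region of integration into $|\eta|\le 1+|\xi|$ and $|\eta|>1+|\xi|$, the first contributes at most $C(1+|\xi|)\cdot(1+|\xi|)^{k-|\alpha|} = C(1+|\xi|)^{k+1-|\alpha|}$, while the second contributes $\int_{|\eta|>1+|\xi|}|\eta|^{k-|\alpha|}\,d\eta \lesssim (1+|\xi|)^{k+1-|\alpha|}$, the tail integral converging since $k-|\alpha|\le -2$. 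Hence $b\in \mathcal{S}^{k+1}$, so $(R\circ A)g$ is the image of $g_b$ under the pseudodifferential operator with symbol $b$, which lies in $\Psi^{k+1}_b(\mathbb{R}^n)$, proving the lemma. The main (and essentially only) technical point is controlling the tail of the $\eta$-integral in the symbol estimate, which is exactly what the hypothesis $k\le -2$ provides.
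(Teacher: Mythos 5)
Your proof follows essentially the same route as the paper: substitute $g=g_b\,\delta(\rho)$, set $\rho=0$, integrate out the $\eta$ variable, and identify the resulting kernel $b(x,\xi)=\int a(x,0,\xi,\eta)\,d\eta$ as a symbol of order $k+1$. The only difference is that you explicitly carry out the region-splitting argument showing $b\in\mathcal{S}^{k+1}(\mathbb{R}^n)$, which the paper merely asserts "follows from the properties of $a$ and differentiating under the integral."
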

\begin{proof}
Denote the symbol of $A$ with
 $a(x,\rho,\xi,\eta)$.
	The symbol
\begin{equation*}
	\alpha(x,\rho,\xi)
	= \frac{1}{2\pi} \int_{-\infty}^{\infty}
	a(x,\rho,\xi,\eta) d\eta
\end{equation*}
(for any fixed $\rho$) belongs to the class
$\mathcal{S}^{k+1}(\mathbb{R}^{n})$, 
 which follows from the
	properties of $a(x,\rho,\xi,\eta)$ as a member of
	$\mathcal{S}^{k}(\mathbb{R}^{n+1})$ and differentiating under the
	integral.
The composition $R \circ A g$ is given by
\begin{align*}
	\frac{1}{(2\pi)^{n+1}} \int a(x,0,\xi,\eta) \widetilde{g}_b(\xi)&
	e^{i{\mathbf x}\cdot \xi} d\xi d\eta\\
 =&
	\frac{1}{(2\pi)^{n}} \int \left[ \frac{1}{2\pi}
	\int_{-\infty}^{\infty} a(x,0,\xi,\eta) d\eta\right]  \widetilde{g}_b(\xi)
	e^{i{\mathbf x}\cdot \xi} d\xi\\
	=&
	\frac{1}{(2\pi)^{n}} \int \alpha(x,0,\xi)  \widetilde{g}_b(\xi)
	e^{i{\mathbf x}\cdot \xi} d\xi\\
	=& A_{k+1,b} g_b.
\end{align*}
\end{proof}
\begin{remark} We can generalize Lemma \ref{restrict} to decomposable operators
	of order $k=-1$ by using the
	residue calculus to integrate out the $\eta$ variable.  See also
Theorem 5.2.4 $ii)$ of \cite{CP}.
\end{remark}

We work directly
 with inverses to elliptic operators and as such
we consider symbols which are also dependent on 
 the $\rho$ variable.  Even if we were to reduce our
operators to the case handled by the transmission
 property, we would need a way to deal with the
$\rho$ dependence.  The following lemma is useful
 in illustrating the effect of multiplication by 
a factor of $\rho$ with an operator (while operating
 on a boundary distribution).  
\begin{lemma}
\label{liglem}
Let $g\in {\mathscr{D}'}(\mathbb{R}^{n+1})$ of the
form $g(x,\rho) =g_b(x) \delta(\rho)$ for
$g_b\in {\mathscr{D}'}(R^{n})$ .  Let
$A\in \Psi^k(\mathbb{R}^{n+1})$, be a
 pseudodifferential operator of order $k$.
Let $\rho$ denote the operator of
 multiplication with $\rho$. Then
$\rho\circ A$ induces a pseudodifferential 
operator of order $k-1$
	on $g$:
\begin{equation*}
	\rho A g = A_{k-1} g.
\end{equation*}
\end{lemma}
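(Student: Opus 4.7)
The plan is to exploit the fact that $g(x,\rho) = g_b(x)\delta(\rho)$ has full Fourier transform $\widehat{g}(\xi,\eta) = \widetilde{g}_b(\xi)$ independent of $\eta$, and then transfer the multiplication by $\rho$ onto the symbol of $A$ via integration by parts in $\eta$, using $\rho\, e^{i\rho\eta} = -i\,\partial_\eta e^{i\rho\eta}$.

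Concretely, letting $a(x,\rho,\xi,\eta)$ denote the symbol of $A$, I would begin from
\begin{equation*}
A g(x,\rho) = \frac{1}{(2\pi)^{n+1}} \int a(x,\rho,\xi,\eta)\, \widetilde{g}_b(\xi)\, e^{ix\xi} e^{i\rho\eta}\, d\xi\, d\eta,
\end{equation*}
multiply by $\rho$, substitute $\rho\, e^{i\rho\eta} = -i\,\partial_\eta e^{i\rho\eta}$, and integrate by parts in $\eta$. The factor $\widetilde{g}_b(\xi)$ carries no $\eta$-dependence, so the derivative lands entirely on $a$, yielding
\begin{equation*}
\rho\, A g(x,\rho) = \frac{i}{(2\pi)^{n+1}} \int \partial_\eta a(x,\rho,\xi,\eta)\, \widetilde{g}_b(\xi)\, e^{ix\xi} e^{i\rho\eta}\, d\xi\, d\eta.
\end{equation*}
Since $a \in \mathcal{S}^k(\mathbb{R}^{n+1})$, one has $\partial_\eta a \in \mathcal{S}^{k-1}(\mathbb{R}^{n+1})$, so the right-hand side is $B g$ where $B \in \Psi^{k-1}(\mathbb{R}^{n+1})$ has symbol $i\,\partial_\eta a$. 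This is exactly the desired conclusion $\rho A g = \Psi^{k-1} g$.

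The main obstacle is justifying the integration by parts rigorously, since for $k\ge -1$ the $\eta$-integral is only an oscillatory integral (it need not be absolutely convergent, and the boundary contributions at $|\eta|\to\infty$ are not obviously negligible). The standard remedy, which I would carry out, is to insert a cutoff $\chi(\varepsilon\eta)$ with $\chi \in C_c^\infty(\mathbb{R})$ and $\chi(0)=1$, perform the integration by parts on the regularized integral (which is then absolutely convergent because $\widetilde{g}_b$ is Schwartz in $\xi$), and let $\varepsilon \to 0$. The boundary terms generated by the cutoff produce factors $\varepsilon\,\chi'(\varepsilon\eta)\,a$ whose oscillatory $\eta$-integral against the Schwartz function $\widetilde{g}_b(\xi)$ vanishes in the limit by the usual symbol estimates on $a$ and repeated integration by parts using $\frac{1}{i\rho}\partial_\eta e^{i\rho\eta} = e^{i\rho\eta}$ (on compact sets in $\rho\neq 0$, with the $\rho=0$ case handled separately as a distributional pairing). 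Once this regularization is in place, the identification of $i\,\partial_\eta a$ as the symbol of the resulting operator is automatic and the order drop by one is immediate.
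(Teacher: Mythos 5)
Your argument is essentially identical to the paper's: both write $\rho A g$ as an oscillatory integral against $\widetilde g_b(\xi)$, replace $\rho\,e^{i\rho\eta}$ by $-i\,\partial_\eta e^{i\rho\eta}$, integrate by parts in $\eta$ to land the derivative on the symbol $a$, and observe that $\partial_\eta a\in\mathcal S^{k-1}$. Your added discussion of regularizing the oscillatory integral with a cutoff $\chi(\varepsilon\eta)$ is a standard justification that the paper omits; it does not change the route, only makes the integration by parts rigorous.
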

\begin{proof}
We write the symbol of the operator $A$ symbol
 as
$a(x,\rho,\xi,\eta)$: $A=Op(a)$.
Since $a(x,\rho,\xi,\eta)$ is of order
$k$, 
$\rho \cdot a(x,\rho,\xi,\eta)$ 
is also of order $k$, and
\begin{align*}
	\rho\circ A (g)=&
	\int \rho a(x,\rho,\xi,\eta)  \widetilde{g}_b(\xi)
	e^{ix\xi} e^{i\rho\eta} d\xi d\eta
	\\
	=& -i \int a(x,\rho,\xi,\eta)
	\widetilde{g}_b(\xi)
	e^{ix\xi} \frac{\partial}{\partial\eta}
	e^{i\rho\eta} d\xi d\eta\\
	=& i \int \frac{\partial}{\partial\eta}
	\Big(a(x,\rho,\xi,\eta)\Big)
	\widetilde{g}_b(\xi)
	e^{ix\xi}
	e^{i\rho\eta} d\xi d\eta\\
	=& A_{k-1} g,
	\end{align*}
	as $\frac{\partial}{\partial\eta}
	\Big(a(x,\rho,\xi,\eta)\Big)$ is a symbol of class $\mathcal{S}^{k-1}(\mathbb{R}^{n+1})$.
\end{proof}

Lemma \ref{restrict} concerned itself with the restrictions of
pseudodifferential operators (applied to distributions supported
on the boundary) to the boundary, while Theorem \ref{estinvell}
allows us to consider pseudodifferential operators applied to
restrictions of distributions.  A special case of 
 Theorem \ref{estinvell} is
\begin{lemma}
\label{ellrest}
	Let
	$A\in \Psi^k(\mathbb{R}^{n+1})$, for $k\le -1$, be 
a decomposable operator.
	Then
\begin{equation*}
A \circ  R \circ A_{-\infty} :
\mathscr{E}'(\mathbb{H}_-^{n+1} )
  \rightarrow C^{\infty}(\mathbb{H}_-^{n+1})
\end{equation*}
i.e., $A \circ  R \circ
 A_{-\infty}=A_{-\infty}$.
\end{lemma}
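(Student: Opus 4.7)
The strategy is to chain three continuity statements—one for $\Psi^{-\infty}$, one for the trace $R$, and one for $A$—and to exploit the fact that, since $A$ is decomposable, the Sobolev estimate of Theorem \ref{estinvell} is available at arbitrarily high order; the $C^\infty$ conclusion will then fall out of Sobolev embedding.

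Concretely, given $\phi\in\mathscr{E}'(\mathbb{H}_-^{n+1})$, I would set $f:=\Psi^{-\infty}\phi\in C^\infty(\mathbb{R}^{n+1})$. By the continuity criterion displayed just after \eqref{regop}, together with the obvious continuity of the trace $R:C^\infty(\mathbb{R}^{n+1})\to C^\infty(\mathbb{R}^n)$, the boundary value $g_b:=Rf$ satisfies, for every $t\ge 0$ and every compact $K'\subset\mathbb{R}^n$, an estimate
\[
 \|g_b\|_{W^t(K')} \lesssim \|\phi\|_{W^{-s(t)}(K'')}
\]
for some $s(t)\ge 0$ and some compact $K''\subset\mathbb{R}^{n+1}$. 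Interpreting $A\circ R\circ\Psi^{-\infty}$ as in Theorem \ref{estinvell}, I then apply $A$ to the distribution $g(x,\rho):=g_b(x)\delta(\rho)$, after first multiplying $g_b$ by a compactly supported cutoff that is identically $1$ on a neighbourhood of the $x$-projection of the target compact set in $\mathbb{H}_-^{n+1}$.

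Because $A$ is decomposable, the extension of Theorem \ref{estinvell} discussed after the definition of decomposable operators yields, for every $s'\ge 0$,
\[
 \|Ag\|_{W^{s'}_{loc}(\mathbb{H}_-^{n+1})} \lesssim \|g_b\|_{W^{s'+k+1/2}(\mathbb{R}^n)} \lesssim \|\phi\|_{W^{-s(s')}(K'')}.
\]
Choosing $s'$ larger than $l+(n+1)/2$ and invoking Sobolev embedding controls each semi-norm $p_{l,K}(Ag)$ by $\|\phi\|_{W^{-s(s')}(K'')}$. This is exactly the continuity statement identifying $A\circ R\circ\Psi^{-\infty}$ as a regularizing operator on $\mathscr{E}'(\mathbb{H}_-^{n+1})$.

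The only point that will require care—but is not conceptually new—is the compact-support bookkeeping: Theorem \ref{estinvell} is stated for $g_b\in\mathscr{D}(\mathbb{R}^n)$, whereas $R\circ\Psi^{-\infty}\phi$ is merely smooth. This is handled by the cutoff described above, with the complementary off-diagonal piece contributing only through the smooth part of the kernel of $A$ and thus producing an additional regularizing, hence harmless, tail. Once that localisation is in place the chain of estimates above runs verbatim.
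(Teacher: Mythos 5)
Your argument is essentially the paper's: both set $g_b = R\circ\Psi^{-\infty}\phi$, apply the decomposable-operator extension of Theorem~\ref{estinvell} to $g_b\times\delta(\rho)$ at arbitrarily high Sobolev order (absorbing the loss into the smoothing of $\Psi^{-\infty}$), and finish by Sobolev embedding. The only difference is that you spell out the localisation bookkeeping that the paper leaves implicit; the core chain of estimates is the same.
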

\begin{proof}
	Let $f\in \mathscr{E}'(\mathbb{H}_-^{n+1} )$ and
	apply Theorem \ref{estinvell} 
(for decomposable operators)
 with $g_b = R\circ A_{-\infty} f$.
	Then for all $s$
\begin{align*}
\|A \circ R \circ A_{-\infty}
	 f\|_{W^s(\mathbb{H}^{n+1}_-)} \lesssim&
	\| R\circ A_{-\infty} f \|_{W^{s+k+1/2}(\mathbb{R}^{n})}\\
	\lesssim &
	\|A_{-\infty} f \|_{W^{\max(1,s+1)}(\mathbb{R}^{n+1})}\\
	\lesssim &
	\| f \|_{W^{-\infty}(\mathbb{H}_-^{n+1})}.
\end{align*}
	The lemma thus follows from the Sobolev Embedding Theorem.
\end{proof}

Similarly proven is the
\begin{lemma}
	\label{ellrestbnd}
	Let
	$A\in \Psi^k(\mathbb{R}^{n+1})$, for $k\le -1$,
 be a decomposable operator.
	Then
\begin{equation*}
	A \circ   A_{-\infty,b} :
	\mathscr{E}'(\mathbb{R}^{n} ) \rightarrow
	C^{\infty}(\mathbb{R}^{n+1}).
\end{equation*}
\end{lemma}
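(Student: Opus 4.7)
The plan is to mirror the proof of Lemma \ref{ellrest} almost verbatim, exploiting the observation that the only structural difference between the two statements is how the smooth boundary datum is produced: here by applying $\Psi_b^{-\infty}$ to a boundary distribution, whereas in Lemma \ref{ellrest} it came from restricting a regularized interior distribution. Given $\phi_b \in \mathscr{E}'(\mathbb{R}^n)$, I would set $g_b := \Psi_b^{-\infty}\phi_b$ and form the distribution $g(x,\rho) := g_b(x)\delta(\rho)$ on $\mathbb{R}^{n+1}$, so that the composition to be controlled is simply $Ag$.

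Invoking Theorem \ref{estinvell} in its decomposable form (valid for all $s$) then yields
\begin{equation*}
\|Ag\|_{W^s_{loc}(\mathbb{H}^{n+1}_-)} \lesssim \|g_b\|_{W^{s+k+1/2}(\mathbb{R}^n)} \lesssim \|\phi_b\|_{W^{-N}(\mathbb{R}^n)}
\end{equation*}
for every $s\ge 0$ and any fixed $N$, where the second inequality uses that $\Psi_b^{-\infty}$ is regularizing on the boundary and so converts any negative Sobolev norm of $\phi_b$ into an arbitrarily large positive one of $g_b$. The same residue-calculus computation, carried out along a contour in the upper half $\eta$-plane and picking up the poles $q_+$ of $\sigma(A)$ instead of $q_-$, produces the parallel estimate on $\mathbb{H}^{n+1}_+$. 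Sobolev embedding then places $Ag$ in $C^\infty$ on each open half-space.

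The genuinely new step, and the main obstacle, is smoothness of $Ag$ across $\{\rho = 0\}$ needed to reach the globally smooth conclusion. Here the key is that $g_b$ is itself $C^\infty$, so Lemma \ref{restrict} (together with the remark extending it to decomposable operators of order $k = -1$ via residue calculus) identifies the boundary trace of $Ag$ with $\Psi_b^{k+1} g_b = \Psi_b^{-\infty}\phi_b$, which is smooth. Iterating with $\partial_\rho^j \circ A \in \Psi^{k+j}$—obtained by commuting $\partial_\rho$ past $\sigma(A)$, a procedure which preserves the decomposable structure—and reapplying the restriction lemma shows that all normal derivatives of $Ag$ at $\rho = 0$ are smooth functions of $x$ and agree from both sides. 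A standard gluing argument then promotes the one-sided interior smoothness already established to $Ag \in C^\infty(\mathbb{R}^{n+1})$, completing the proof.
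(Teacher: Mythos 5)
Your first paragraph reproduces what is in fact the paper's intended argument: set $g_b=\Psi_b^{-\infty}\phi_b$ and $g=g_b\,\delta(\rho)$, apply Theorem \ref{estinvell} (in its decomposable form, valid for all $s$) to get $\|Ag\|_{W^s_{loc}(\mathbb{H}^{n+1}_-)}\lesssim\|g_b\|_{W^{s+k+1/2}(\mathbb{R}^n)}\lesssim\|\phi_b\|_{W^{-N}(\mathbb{R}^n)}$, and conclude by Sobolev embedding. The paper's ``Similarly proven is the...'' points to exactly this transcription of the proof of Lemma \ref{ellrest}.

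The second paragraph, however, is wrong, and the strengthening it is aiming at is false. Smoothness of $Ag$ across $\{\rho=0\}$ does \emph{not} hold, even for $g_b$ smooth and compactly supported: the symbol $\sigma(A)$ is meromorphic in $\eta$ with poles in \emph{both} half-planes, and the residue representation of $Ag$ for $\rho<0$ picks up the poles $q_-$ with $\mbox{Im}\,q_-<0$, whereas the one for $\rho>0$ picks up the poles $q_+$ with $\mbox{Im}\,q_+>0$. The two one-sided expansions agree only through normal order $|k|-2$; the $(|k|-1)$-th normal derivative jumps. The elementary model is $A=(1-\Delta)^{-1}$ on $\mathbb{R}$, a decomposable operator of order $-2$, for which $A\delta=\tfrac12 e^{-|\rho|}$: this is continuous at $\rho=0$ but $\partial_\rho(A\delta)$ jumps by $-1$. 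Your appeal to Lemma \ref{restrict} to assert that ``all normal derivatives agree from both sides'' breaks down precisely here: Lemma \ref{restrict} requires order $\le -2$, so $\partial_\rho^j\circ A\in\Psi^{k+j}$ satisfies its hypotheses only for $j\le|k|-2$, and for $j\ge|k|-1$ there is no two-sided trace formula --- the one-sided limits genuinely differ. The decomposition $A=A_k+\Psi^{-N}$ does not rescue this either: taking $N\to\infty$ smooths only the $\Psi^{-N}$ tail, while the principal part $A_k$ is fixed and its jump in the $(|k|-1)$-th normal derivative persists.

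The resolution is that the target space written in the statement is almost certainly a typographical slip for $C^\infty(\overline{\mathbb{H}}^{n+1}_-)$, matching the conclusion of Lemma \ref{symbolcut} and the half-space conclusion of Lemma \ref{ellrest}. Your first paragraph already establishes that corrected statement; the gluing paragraph should simply be dropped.
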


\section{Analysis on intersections of half-spaces}
\label{secInterHalf}

Another situation in which the above
 analysis of pseudodifferential operators
can be applied, {and the situation which is
 our main interest in writing this
  article}, is on an (non-degenerate)
 intersection of smooth domains.  Localizing the
problem in analogy to the localizations of
 Section \ref{secPseudohalf} allows us to
 represent each
domain composing the intersection as a separate
 half-space.  With appropriate choice of metric
the domain can be modeled by the intersection 
 of several half-spaces.  In this section we 
study some properties of pseudodifferential 
 operators on such spaces.  The 
 motivation for this study is the application of
the following results to the study of elliptic operators
 on intersection domains,
 and in particular to be able to obtain weighted
 estimates for solutions to elliptic
problems on the intersection of smooth domains.

 In this section, $\rho$ 
  is a variable in $\mathbb{R}^m$
   for $m\le n$: $\rho=(\rho_1,\ldots\rho_m)$,
 and $x = (x_1,\ldots, x_{n-m})$
  is an $n-m$ dimensional variable.
We define the half-spaces
\begin{equation*}
\mathbb{H}_j^n =
\{(x,\rho) \in \mathbb{R}^n
: \rho_j<0 \}.
\end{equation*}
With a multi-index
$I=(i_1,\ldots, i_k)$, we denote the intersection of
half-spaces
\begin{equation*}
\mathbb{H}_I^n =
\bigcap_{j\in I} \mathbb{H}_j^n.
\end{equation*}
 The convention used here is
$\mathbb{H}_I^n = \mathbb{R}^n$ when
 $I=\emptyset$.
For this section, we will fix $I$
with $|I|=m$.
 Without loss of generality,
$I=(1,\ldots,m)$.

We use the multi-index notation:
\begin{equation*}
\rho_{J}^{\alpha}
= \prod_{j\in J} \rho_j^{\alpha_j},
\end{equation*}
for $\alpha = 
 (\alpha_1,\ldots, \alpha_{|J|})$ a multi-index. 
 To indicate a missing index, $j$, we use the
notation $\hat{j}$.  Thus we write
\begin{equation*}
 I_{\hat{j}} : = I \setminus \{j\}.
\end{equation*}
For ease of notation, in place of
$\rho_{I_{\hat{j}}}^{\alpha}$,
we write
\begin{equation*}
\rho_{\hat{j}}^{\alpha}
= \rho_1^{\alpha_1} \cdots \rho_{j-1}^{\alpha_{j-1}}
\rho_{j+1}^{\alpha_{j+1}} \cdots
\rho_m^{\alpha_{m}}.
\end{equation*}
Similarly, we write
\begin{equation*}
\rho_{\hat{k}\hat{j}}^{\alpha} = 
\prod_{{i\in I}\atop
	{i\neq j,k}} \rho_i^{\alpha_i}.
\end{equation*}
In the case we have equal powers,
\begin{equation*}
 \alpha_1 = \cdots =\alpha_{j-1}=
  \alpha_{j+1} = \cdots = \alpha_m
 = r,
\end{equation*}
we write
\begin{equation*}
 \rho_{\hat{j}}^{r\times(m-1)}
   := \rho_{\hat{j}}^{\alpha}.
\end{equation*}

We now define the weighted Sobolev norms
on the half-spaces, for $\alpha\in \mathbb{R}$,
 and
$s,k \in\mathbb{N}$:
\begin{multline*}
W^{\alpha,s} (\mathbb{H}_I^n,
 \rho,k)
= \\
 \left\{
f\in W^{\alpha}(\mathbb{H}_I^n) 
\Big| \rho^{(sk - rk)\times m }f \in
W^{\alpha + s-r}(\mathbb{H}_I^n) 
\mbox{ for each } 0\le r\le s
\right\} 
\end{multline*}
with norm
\begin{equation*}
\left\| f \right\|_{
	W^{\alpha,s} (\mathbb{H}_I^n,\rho,k) 
} 
= \sum_{0\le r \le s} 
\left\|\rho^{(sk - rk)\times m} f \right\|_{
	W^{\alpha + s -r} (\mathbb{H}_I^n) 
}. 
\end{equation*}
Similar weighted spaces can be defined 
 with one (or more) $\rho_j$ terms missing.
For example,
\begin{multline*}
W^{\alpha,s} (\mathbb{H}_I^n,
\rho_{\hat{j}},k)
= \\
\left\{
f\in W^{\alpha}(\mathbb{H}_I^n) 
\Big| \rho_{\hat{j}}^{(sk - rk)\times (m-1) }f \in
W^{\alpha + s-r}(\mathbb{H}_I^n) 
\mbox{ for each } 0\le r\le s
\right\} 
\end{multline*}
with norm
\begin{equation*}
\left\| f \right\|_{
	W^{\alpha,s} (\mathbb{H}_I^n,\rho_{\hat{j}},k) 
} 
= \sum_{0\le r \le s} 
\left\|\rho_{\hat{j}}^{(sk - rk)\times (m-1)} f \right\|_{
	W^{\alpha + s -r} (\mathbb{H}_I^n) 
}. 
\end{equation*}

In the case $k=1$ we shall use the notation,

\begin{equation*}
W^{\alpha,s}
\left(\mathbb{H}_I^n,\rho
\right)
:= W^{\alpha,s} (\mathbb{H}_I^n,
\rho,1)
\end{equation*}
 which has the norm
\begin{equation*}
\| f \|_{W^{\alpha,s}
	\left(\mathbb{H}_I^n,\rho
	\right)}
 = \sum_{r=0}^s \| \rho^{r\times m}
 f \|_{ W^{\alpha+r}
 	\left(\mathbb{H}_I^n\right)}.
\end{equation*}

As there are several
 domains whose boundaries
make up the boundary of
 an intersection domain, we
  use a subscript 
to indicate a pseudodifferential
 operator in the class of operators on a 
  specific boundary.  
For example, if $A\in \Psi^{\alpha}(\partial
 \mathbb{H}^n_j)$, we write
$A = A_{\alpha,bj}$.  We adhere to 
 the convention that 
$A_k$ denotes an operator in 
 $\Psi^{k}(\mathbb{R}^n)$, and
that $A_{k,bj }$ denotes an operator in 
$\Psi^{k}(\partial \mathbb{H}^n_j)$.

To denote extensions by 0 across
$\rho_j=0$ to $\rho_j>0$,
we use the
superscript $E_j$:
let $g\in L^2(\mathbb{H}_I^n)$; then
$g^{E_j}\in L^2 
\left(\mathbb{H}_{I_{\hat{j}}}^n
\right)$ is 
defined by
\begin{equation*}
g^{E_j} = 
\begin{cases} g &\mbox{
	if } \rho_j < 0  \\ 
0 & \mbox{if } \rho_j \ge  0.
\end{cases}
\end{equation*} 
Similarly, for a multi-index, $J$, we
define
$ g^{E_J} \in L^2\left(
\mathbb{H}_{I\setminus J}^n \right)$ by
$ g^{E_J} = g$ on 
$\mathbb{H}_I^n$ and 0 elsewhere
(that is for any 
$(x,\rho)\in \mathbb{H}_{I\setminus J}^n$
 for which any $\rho_j\ge 0$ for $j\in J$). 

{
One of the (equivalent) definitions of 
 Sobolev spaces on Lipschitz domains 
(which applies to our case of intersection domains)
 relies on first defining the Sobolev 
  spaces in $\mathbb{R}^n$ and then 
 restricting functions defined in all of
$\mathbb{R}^n$ to a bounded Lipschitz domain.  The next
 Lemmas show that multiplication by factors of the 
  defining functions allows one to consider extensions by
 zero as the functions on which to apply restrictions.  
}

We establish
\begin{lemma}
\label{rhoOddReflect}
	Let $g\in W^s
	\left(\mathbb{H}_I^n\right)$
for some integer $s\ge 0$.
	Then $\rho_j^s g^{E_j}\in
	W^s\left(\mathbb{H}_{I_{\hat{j}}}^n
	\right)$.
\end{lemma}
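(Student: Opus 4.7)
The plan is to argue by induction on $s$. The base case $s = 0$ is immediate: extension by zero is an isometry $L^2(\mathbb{H}_I^n) \hookrightarrow L^2(\mathbb{H}_{I_{\hat{j}}}^n)$.

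For the inductive step, suppose the lemma holds at level $s$ and take $g \in W^{s+1}(\mathbb{H}_I^n)$. The crux is that $\rho_j^{s+1} g$ has vanishing trace on $\{\rho_j = 0\}$: the Sobolev trace of $g$ on this hyperplane is well-defined since $g \in W^{s+1}$, and multiplication by the factor $\rho_j^{s+1}$ annihilates it. Consequently $\rho_j^{s+1} g^{E_j}$ is continuous across $\{\rho_j = 0\}$, and its distributional first derivatives coincide with the classical pointwise derivatives extended by zero, with no Dirac contribution appearing on the boundary hyperplane. This yields
\[
\partial_{\rho_i}(\rho_j^{s+1} g^{E_j}) = \rho_j^{s+1}(\partial_{\rho_i} g)^{E_j} \qquad (i \ne j),
\]
\[
\partial_{\rho_j}(\rho_j^{s+1} g^{E_j}) = (s+1)\, \rho_j^s g^{E_j} + \rho_j^{s+1}(\partial_{\rho_j} g)^{E_j},
\]
with analogous Leibniz formulas for the tangential $x$-derivatives.

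Each such first derivative will then be shown to lie in $W^s(\mathbb{H}_{I_{\hat{j}}}^n)$ by appealing to the inductive hypothesis. Since $g \in W^{s+1} \subset W^s$, induction gives $\rho_j^s g^{E_j} \in W^s$, which handles the summand $(s+1)\rho_j^s g^{E_j}$. A term of the form $\rho_j^{s+1}(\partial_i g)^{E_j}$ is factored as $\rho_j \cdot \rho_j^s(\partial_i g)^{E_j}$; applying the hypothesis to $\partial_i g \in W^s$ yields $\rho_j^s(\partial_i g)^{E_j} \in W^s$, and multiplication by the smooth weight $\rho_j$ preserves $W^s$ (working locally, or up to a smooth cutoff as is the convention elsewhere in the paper). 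Combined with the immediate $L^2$ bound on $\rho_j^{s+1} g^{E_j}$ itself, this gives $\rho_j^{s+1} g^{E_j} \in W^{s+1}(\mathbb{H}_{I_{\hat{j}}}^n)$, closing the induction.

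The only real obstacle, modest but essential, is justifying that no singular boundary contribution arises when differentiating across $\{\rho_j = 0\}$. This is precisely what the factor $\rho_j^s$ in the statement is calibrated for: each $\rho_j$-derivative consumes one power of $\rho_j$, but the remaining factor still has vanishing trace on $\{\rho_j = 0\}$, so the distributional Leibniz rule can be applied cleanly at every step of the induction.
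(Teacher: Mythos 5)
Your argument is correct, but it takes a genuinely different route from the paper. The paper proceeds by a single direct computation: it applies Leibniz to $\partial_{\rho_j}^s\bigl(\rho_j^s g^{E_j}\bigr)$, expands each factor $\partial_{\rho_j}^{s-k} g^{E_j}$ as a sum of extensions of interior derivatives together with delta-function terms of order at most $s-k-1$, and then observes that the surviving factor $\rho_j^{s-k}$ annihilates every one of those delta terms since $\rho_j^m\,\delta^{(l)}(\rho_j)=0$ whenever $m>l$. You instead set up an induction on $s$, observing that the single power $\rho_j^{s+1}$ suffices to kill the trace of $g$ on $\{\rho_j=0\}$ so that one distributional derivative produces no singular boundary term, and then peel off one power of $\rho_j$ and one derivative to land back in a case covered by the inductive hypothesis (applied either to $g$ itself or to $\partial_i g\in W^s$). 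Your approach avoids the explicit delta-function bookkeeping at the cost of invoking the trace theorem and checking carefully that each first derivative lies in $W^s$; the paper's approach is shorter once one is willing to manipulate the $\delta^{(i)}$ terms directly. Both are sound, and both implicitly work locally (or with compactly supported data) so that multiplication by powers of $\rho_j$ is a bounded operation, which you were right to flag.
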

\begin{proof}
	We only need to check the derivatives 
	with respect to $\rho_j$.  We have
\begin{equation}
\label{derWithDelta}
	\partial_{\rho_j}^s \left(
	\rho_j^s g^{E_j}
	\right) = \sum c_k \rho_j^{s-k} 
	\partial_{\rho_j}^{s-k} 
	g^{E_j}.
\end{equation}
$\partial_{\rho_j}^{s-k} g^{E_j}$
	itself is a sum of terms
	of (derivatives of) 
	delta functions,
$\delta^{(i)}$, $i\le s-k-1$,
 in addition to the extension of terms
   $\left(\partial_{\rho_j}^{s-k-i-1} 
	g^{E_j}\right)\Big|_{\mathbb{H}_I^n}$:
\begin{equation*}
 \partial_{\rho_j}^{s-k} g^{E_j}
	= \sum_{i=0}^{s-k} d_i
	\delta^{(i-1)}(\rho_j)
	\left(\partial_{\rho_j}^{s-k-i} 
	g\right)^{E_j},
\end{equation*}
	where we consider $\delta^{-1}\equiv 1$, 
	{and the $d_i$ are constants with $d_0 = 1$.}
	Inserting this into \eqref{derWithDelta},
	the delta functions combine with the
	powers of $\rho_j$ to yield zero, and we have
	\begin{equation*}
	\partial_{\rho_j}^s \left(
	\rho_j^s g^{E_j}
	\right) = \sum c_k \rho_j^{s-k} 
	\left(\partial_{\rho_j}^{s-k} 
	g\right)^{E_j}.
	\end{equation*}
	The lemma now follows 
(in the case $s$ is an integer)
 by the assumption
	on the regularity of $g$
	in $\mathbb{H}_I$.
\end{proof}

A similar proof shows
\begin{lemma}
\label{oddReflectWithRho}
	Let $s\ge 0$ be an integer,
	$k \in \mathbb{N}$, and 
	$\rho_j^{rk} g\in W^{r-\alpha}
	\left(\mathbb{H}_I^n\right)$
	for integers $r\le s$ and
$\alpha\ge 0$.
	Then $\rho_j^{sk} g^{E_j}\in
	W^{s-\alpha}
\left(\mathbb{H}_{I_{\hat{j}}}^n
	\right)$.
\end{lemma}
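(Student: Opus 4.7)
The argument runs parallel to the proof of Lemma \ref{rhoOddReflect}, incorporating the two extra parameters $k$ and $\alpha$.  First reduce to the case in which $s-\alpha$ is a non-negative integer; the general case will follow by interpolation in $\alpha$, together with the basic fact that extension by zero from $L^2(\mathbb{H}_I^n)$ to $L^2(\mathbb{H}_{I_{\hat{j}}}^n)$ is an isometry (which handles the $s-\alpha=0$ base case).

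Granted $s-\alpha\in\mathbb{Z}_{\ge 0}$, tangential derivatives (in $x$ and in the variables $\rho_i$ for $i\neq j$) commute with multiplication by $\rho_j^{sk}$ and with the extension $E_j$, so these are controlled by the hypothesis directly. The essential computation is the $L^2(\mathbb{H}_{I_{\hat{j}}}^n)$-bound on
\begin{equation*}
\partial_{\rho_j}^{s-\alpha}\left(\rho_j^{sk}g^{E_j}\right)
 = \sum_{n=0}^{s-\alpha} c_n\,\rho_j^{sk-n}\sum_{i=0}^{s-\alpha-n} d_i\,\delta^{(i-1)}(\rho_j)\,\bigl(\partial_{\rho_j}^{s-\alpha-n-i}g\bigr)^{E_j},
\end{equation*}
arrived at by Leibniz and the decomposition of $\partial_{\rho_j}^p g^{E_j}$ worked out in the proof of Lemma \ref{rhoOddReflect} (with the convention $\delta^{(-1)}\equiv 1$). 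The same annihilation mechanism applies: $\rho_j^{m}\delta^{(i-1)}(\rho_j)=0$ for $m\ge i$, and here $m=sk-n\ge s-\alpha-n\ge i$ since $k\ge 1$ and $\alpha\ge 0$, so every term with $i\ge 1$ vanishes. What survives is a finite sum of extensions of the form $\rho_j^{sk-n}(\partial_{\rho_j}^{s-\alpha-n}g)^{E_j}$.

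It then suffices to verify $\rho_j^{sk-n}\partial_{\rho_j}^{s-\alpha-n}g\in L^2(\mathbb{H}_I^n)$ for each $0\le n\le s-\alpha$. The natural match is with the hypothesis at $r=s-n$, which furnishes $\rho_j^{(s-n)k}g\in W^{s-n-\alpha}(\mathbb{H}_I^n)$; one of the Leibniz terms in $\partial_{\rho_j}^{s-\alpha-n}\bigl(\rho_j^{(s-n)k}g\bigr)$ is precisely $\rho_j^{(s-n)k}\partial_{\rho_j}^{s-\alpha-n}g$, which differs from the desired $\rho_j^{sk-n}\partial_{\rho_j}^{s-\alpha-n}g$ only by the (locally bounded) factor $\rho_j^{n(k-1)}$. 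The remaining Leibniz terms carry strictly more powers of $\rho_j$ and strictly fewer $\rho_j$-derivatives of $g$, and are absorbed by a downward induction on the derivative count, invoking the hypothesis at correspondingly smaller values of $r$.

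The principal obstacle is this last step: arranging the induction so that every Leibniz remainder matches one of the hypotheses $\rho_j^{rk}g\in W^{r-\alpha}(\mathbb{H}_I^n)$ at a unique $r\in\{0,1,\dots,s\}$, with the powers accounted for by boundedness of $\rho_j$ on the (local) region of interest. Once this bookkeeping is in place, summing over $n$ delivers the claimed $W^{s-\alpha}$-bound for $s-\alpha\in\mathbb{Z}_{\ge 0}$, and the extension to general $\alpha$ follows by interpolation and duality.
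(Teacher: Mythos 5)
Your approach coincides with what the paper intends---the paper's entire ``proof'' of this lemma is the phrase ``a similar proof shows,'' pointing back to Lemma~\ref{rhoOddReflect}---namely Leibniz on $\partial_{\rho_j}^{s-\alpha}(\rho_j^{sk}g^{E_j})$ followed by annihilation of the $\delta^{(i-1)}$ terms by the surviving powers of $\rho_j$ (your inequality $sk-n\ge s-\alpha-n$, i.e.\ $s(k-1)+\alpha\ge 0$, is the right one), leaving extensions of $\rho_j^{sk-n}\partial_{\rho_j}^{s-\alpha-n}g$. You have also correctly identified the wrinkle that distinguishes this from Lemma~\ref{rhoOddReflect}: there the single hypothesis $g\in W^s$ bounds every surviving term at once, whereas here the hypothesis is a family $\rho_j^{rk}g\in W^{r-\alpha}$, and one must inductively peel off Leibniz remainders to isolate $\rho_j^{\cdot}\partial_{\rho_j}^{\cdot}g$ at each level.

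Two points need fixing. First, your sentence ``the remaining Leibniz terms carry strictly more powers of $\rho_j$'' is stated backwards: the $l\ge 1$ terms of $\partial_{\rho_j}^{s-\alpha-n}\bigl(\rho_j^{(s-n)k}g\bigr)$ are $\rho_j^{(s-n)k-l}\partial_{\rho_j}^{s-\alpha-n-l}g$, which have \emph{fewer} powers of $\rho_j$ than the $l=0$ term and fewer than the target exponent $sk-n$. What actually makes the induction close is the inequality $(s-n)k-l\ge(s-n-l)k$, equivalently $l(k-1)\ge 0$, valid because $k\ge 1$: each $l\ge1$ remainder carries at least as many powers of $\rho_j$ as the hypothesis at level $r=s-n-l$ supplies, with $\rho_j^{l(k-1)}$ (locally bounded, nonnegative power) absorbing the surplus. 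Concretely, induct upward on $p=s-\alpha-n$ with claim $\rho_j^{(\alpha+p)k}\partial_{\rho_j}^p g\in L^2_{loc}(\mathbb{H}_I^n)$, base case $p=0$ from the hypothesis at $r=\alpha$; expanding $\partial_{\rho_j}^p(\rho_j^{(\alpha+p)k}g)\in L^2$ and using the preceding inequality on the $l\ge1$ terms gives the step, and then $sk-n\ge (s-n)k$ (again $n(k-1)\ge 0$) gives the target. You flag this bookkeeping as ``the principal obstacle'' and stop short of closing it; it does close, and $l(k-1)\ge 0$ is the one place $k\in\mathbb{N}$ is actually used. Second, the reduction to $s-\alpha\in\mathbb{Z}_{\ge 0}$ by ``interpolation in $\alpha$'' is asserted rather than argued; since the paper only invokes the lemma with integer $\alpha$ (in fact $\alpha=0$), this is harmless, but if you keep it you should say which interpolation theorem applies to this family of hypotheses.
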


For a mutli-index,
$J$, let us denote
\begin{equation*}
\mathbb{H}_{J,bk}^{n-1}
:= \partial\mathbb{H}_k^n
\bigcap \mathbb{H}_{J_{\hat{k}}}^{n},
\end{equation*}
with the convention
 $J_{\hat{k}} = J$ in the case 
$k\notin J$.

 We present the following Theorem which is
  a weighted analogue of Theorem
 \ref{estinvell}.
In the following Theorem we use the notation 
 $\delta_j:= \delta(\rho_j)$.  
 {
 A pseudodifferential 
  operator will be applied to a distribution supported on the
  boundary $\mathbb{H}_{I,bj}^{n-1}$ and to apply Theorem 
\ref{estinvell} we look at the hypotheses with respect to $\eta_j$,
 the dual
 variable to $\rho_j$; for instance, the symbol of the operator will be meromorphic 
with respect to $\eta_j$ with poles giving symbols of order 1 operators.
 We say in this case that the operator satisfies the hypotheses of Theorem
  \ref{estinvell} with respect to $\mathbb{H}_{I,bj}^{n-1}$.
}
\begin{thrm}
 	\label{ellBndryDist}
 	Let $A$ be a pseudodifferential operator
 	(of order $-\alpha\le -1$)
 	satisfying the hypotheses of Theorem
 	\ref{estinvell} with respect to $\mathbb{H}_{I,bj}^{n-1}$.
 	Let $0 \le \gamma <1/2$ and $g_b\in W^{\gamma,s}
 \left(
 \mathbb{H}_{I,bj}^{n-1} ,\rho_{\hat{j}},k
 \right)$ with compact support in 
$\overline{\mathbb{H}}_{I,bj}^{n-1}$.
Then, for $\beta - 1/2$ a non-negative integer with
  $ \beta -\alpha \le\gamma$,
\begin{equation*}	
 	\rho^{rk \times m}
 	A\left(g_b^{E_{I_{\hat{j}}}}
 	\times \delta_j\right)
 	\in
 	W^{r+\beta-{1/2}}\left(\mathbb{R}^n\right)
\end{equation*}
 	for all $r\le s$,
 	and
\begin{equation*}
 	\left\|
 	A\left(g_b^{E_{I_{\hat{j}}}}
 	\times \delta_j\right)
 	\right\|_{W^{\beta-1/2,s}
 		\left(\mathbb{H}^n_I,
 		\rho,k\right)}
 	\lesssim 
 	\| g_b\|_{W^{\beta-\alpha,s}
 		\left(
 		\mathbb{H}_{I,bj}^{n-1}
 		,\rho_{\hat{j}}	,k\right)}.
\end{equation*}

The estimates also hold for all 
 $\beta\ge 1/2$ with the property
$-1/2\le \beta-\alpha \le \gamma$.
 \end{thrm}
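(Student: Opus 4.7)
The plan is to reduce to Theorem \ref{estinvell} by separating the weight $\rho^{(sk-rk)\times m}=\rho_j^{sk-rk}\cdot\rho_{\hat{j}}^{(sk-rk)\times(m-1)}$ into a normal factor and a tangential factor relative to the boundary $\{\rho_j=0\}$ that supports the delta distribution. Lemma \ref{liglem} applied repeatedly shows that $\rho_j^{sk-rk}\circ A_{-\alpha}$ is a pseudodifferential operator of order $-\alpha-(sk-rk)$ whose symbol inherits the meromorphic structure in the transform variable $\eta$ dual to $\rho_j$; hence the hypotheses of Theorem \ref{estinvell} survive this composition.

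For the tangential factor $\rho_{\hat{j}}^{(sk-rk)\times(m-1)}$, I would commute it through $A_{-\alpha}$---each commutator $[\rho_i,A_{-\alpha}]$ is of order $-\alpha-1$ by the same argument as Lemma \ref{liglem}---so that it is transferred onto the distribution itself. Because the variables $\rho_i$ for $i\in I_{\hat j}$ do not interact with the normal delta factor $\delta_j$,
\[
\rho_{\hat{j}}^{(sk-rk)\times(m-1)}\bigl(g_b^{E_{I_{\hat j}}}\times\delta_j\bigr)
= \bigl(\rho_{\hat{j}}^{(sk-rk)\times(m-1)}g_b\bigr)^{E_{I_{\hat j}}}\times\delta_j,
\]
so the weight ends up multiplying $g_b$. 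Applying Theorem \ref{estinvell} to the reduced operator gives
\[
\bigl\|\rho^{(sk-rk)\times m}A_{-\alpha}\bigl(g_b^{E_{I_{\hat j}}}\times\delta_j\bigr)\bigr\|_{W^{\beta-1/2+s-r}(\mathbb{R}^n)}
\lesssim
\bigl\|\bigl(\rho_{\hat{j}}^{(sk-rk)\times(m-1)}g_b\bigr)^{E_{I_{\hat j}}}\bigr\|_{W^{\beta-\alpha+s-r-(sk-rk)}(\partial\mathbb{H}_j^n)}
\]
plus strictly lower-order contributions from the commutator terms.

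The final step converts the norm of the zero extension on $\partial\mathbb{H}_j^n$ into an intrinsic weighted norm on $\mathbb{H}_{I,bj}^{n-1}$ by invoking Lemma \ref{oddReflectWithRho} once for each $i\in I_{\hat j}$. The hypothesis $g_b\in W^{\gamma,s}(\mathbb{H}_{I,bj}^{n-1},\rho_{\hat j},k)$ with $-1/2\le\gamma\le 1/2$ is exactly what provides, via that lemma, the weighted regularity of each $\rho_i^{rk}g_b$ needed for the iterated extensions to lie in the appropriate Sobolev space on $\partial\mathbb{H}_j^n$; the constraint $\beta-\alpha\le\gamma$ ensures that the Sobolev exponents obtained after the extensions stay inside the range where Lemma \ref{oddReflectWithRho} remains valid.

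The main obstacle will be the bookkeeping: tracking how the index shifts produced by Lemma \ref{liglem}, the commutator corrections, and the iterated zero extensions match up with the summation defining the weighted norm on each side. I would structure the argument as an induction on $s$, where the commutator errors at step $s$ contribute only to the $r<s$ terms of the target weighted norm and are therefore absorbed by the induction hypothesis. Non-integer Sobolev indices would be handled by interpolation at the end, exactly as in Theorem \ref{estinvell}.
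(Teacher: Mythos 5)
Your overall strategy matches the paper's proof closely: split the weight $\rho^{\,\cdot\times m}$ into the normal factor $\rho_j^{\cdot}$ and the tangential factor $\rho_{\hat j}^{\cdot\times(m-1)}$, absorb the normal factor into the operator by Lemma \ref{liglem}, commute the tangential factor through the resulting operator at the cost of a finite commutator sum, invoke Theorem \ref{estinvell} on each term, and convert the zero-extension Sobolev norms on $\partial\mathbb{H}_j^{n}$ into intrinsic weighted norms on $\mathbb{H}_{I,bj}^{n-1}$ via iterated applications of Lemma \ref{oddReflectWithRho}. One small correction of attribution: for $i\in I_{\hat j}$, the fact that $[\rho_i,A_{-\alpha}]$ drops the order by one is not really ``the same argument as Lemma \ref{liglem}''---that lemma exploits the pairing of the normal variable $\rho_j$ with $\delta_j$ via $\rho_j e^{i\rho_j\eta_j}=-i\partial_{\eta_j}e^{i\rho_j\eta_j}$, whereas for tangential variables you are invoking the standard pseudodifferential commutator fact $\sigma([\rho_i,A])\sim\partial_{\xi_i}\sigma(A)$. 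The conclusion is the same, but it is a different mechanism.

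The genuine gap is the range $-1/2\le\gamma<0$ (equivalently $\beta-\alpha<0$). You assert that the constraint $\beta-\alpha\le\gamma$ ``ensures the Sobolev exponents obtained after the extensions stay inside the range where Lemma \ref{oddReflectWithRho} remains valid,'' but that lemma explicitly requires the target exponent to be of the form $r-\alpha'$ with $\alpha'\ge0$, so it does not cover negative Sobolev indices on the base term $r=0$. The paper handles this separately by a duality/shift argument: it inserts the identity $\Lambda_{bj}^{\alpha-\beta}\circ\Lambda_{bj}^{\beta-\alpha}$, uses that the smoothing factor $\Lambda_{bj}^{\beta-\alpha}$ commutes with the zero extension $E_{I_{\hat j}}$, and then applies the positive-exponent argument to the operator $A_{-\alpha}\circ\Lambda_{bj}^{\alpha-\beta}\in\Psi^{-\beta}$ acting on $\Lambda_{bj}^{\beta-\alpha}g_b\in W^{0,s}$. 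Without some such device your proof only covers $0\le\gamma\le 1/2$, which is a strict subset of the stated hypothesis.

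Two minor points worth flagging for the bookkeeping: (i) you index by $\rho^{(sk-rk)\times m}$ while the statement and the paper's proof index by $\rho^{rk\times m}$ with $r=s-j$; this is just the substitution $r\mapsto s-r$, but keeping one convention consistently will avoid confusion when matching against the definition of $W^{\alpha,s}$. (ii) The paper proves only the endpoint $\beta=\alpha+1/2$ in detail and asserts the remaining $\beta$ follow ``the same steps''; your formulation does not fix $\beta$, which is fine, but the commutator sum's input Sobolev index $\beta-\alpha+s-r-(sk-rk)$ can be strictly less than what $W^{\gamma,s}$ guarantees, and the monotonicity of Sobolev scales (the input exponent being \emph{at most} the one supplied) is what makes the estimate close; that deserves an explicit line.
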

 \begin{proof}
 	We prove the Theorem in the
 	case $\beta=\alpha+\gamma$.  
 	The general case follows
 	the same steps.	 
 	
We use an operator $\Lambda_{bj}$ 
 on $\partial \mathbb{H}_j^{n}$, which is defined 
in analogy to the operator $\Lambda$:
\begin{equation*}
\sigma\left( \Lambda_{bj}^{k}\right)
= 
(1+\xi^2+\eta_{\hat{j}}^2)^{k/2}.
\end{equation*}
With our notation, $\eta_{\hat{j}}$ is
 understood to denote
$(\eta_1, \ldots, \eta_{j-1}, \eta_{j+1}, 
 \ldots, \eta_m)$ and
\begin{equation*}
 \eta_{\hat{j}}^2
  = \eta_1^2 + \ldots + \eta_{j-1}^2+ \eta_{j+1}^2 
    + \ldots \eta_m^2.
\end{equation*}

{ 	We recall that for a bounded 
Lipschitz domain, $\Omega$, 
the operator defined by extension by zero
outside of $\Omega$ to all of $\mathbb{R}^n$
 is bounded on $W^{\gamma}(\Omega)$
(see Theorem 3.33 in \cite{McLean}).}
Thus, we have
 	\begin{equation*}
 	g^{E_{I_{\hat{j}}}}_b \in  W^{\gamma}
 	\left(\partial\mathbb{H}_j^{n}\right),
 	\end{equation*}  
 	or 
 	\begin{equation*}
 	\Lambda^{\gamma}_{bj} g^{E_{I_{\hat{j}}}}_b \in  
 	L^{2}\left(\partial\mathbb{H}_j^{n}\right).
 	\end{equation*} 
 By assumption
\begin{equation}
 \label{lambdaHalf}
 D^{r}_{bj}	\rho_{\hat{j}}^{rk\times(m-1)} 
 	 g_b \in  
 	W^{\gamma}
 	\left(
 	\mathbb{H}_{I,bj}^{n-1}	\right)
\end{equation}
 	for $0\le r \le s$, and 
$D^{r}_{bj}$ a differential operator of 
 order $r$ on $\partial\mathbb{H}_j^{n}$.
Using extensions by zero, we see
\begin{equation*}
	D^{r}_{bj}	\rho_{\hat{j}}^{rk\times(m-1)} 
	g_b^{E_{I_{\hat{j}}}} \in  
	W^{\gamma}
	\left(
	\partial\mathbb{H}_j^{n}	\right)
\end{equation*}
and thus
\begin{equation*}
 	\rho_{\hat{j}}^{rk\times(m-1)}
 	 g_b^{E_{\hat{j}}} \in  
 	W^{r+\gamma}
 	\left(
 	\partial \mathbb{H}_{j}^{n}	\right).
\end{equation*}

Write $g_j = g_b^{E_{{I}_{\hat{j}}}}\times \delta_j$.
We use Lemma \ref{liglem} to write
\begin{equation*}
 	\rho_j^{rk}
 	A g_j =
 	A_{-rk-\alpha} g_j.
\end{equation*}
 	We have
\begin{align*}
 \rho^{rk\times m} 
 	A g_j
 	=& \rho_{\hat{j}}^{rk\times (m-1)}
 	\rho_j^{rk}
 	A_{-\alpha} g_j \\
 	=& \rho_{\hat{j}}^{rk\times (m-1)}
 	A_{-rk-\alpha}g_j
 	\\
 	=&\sum_{l=0}^{r}
 	  A_{-\alpha-rk-(r-l)} 
 	\left(
 		\rho_{\hat{j}}^{lk\times(m-1)} g_j
 	\right).
 \end{align*}

 The $A_{-\alpha-rk-(r-l)} $ operators
 above satisfy the
 	hypotheses of Theorem \ref{estinvell},
 	while
\begin{equation*}
 \rho_{\hat{j}}^{lk\times(m-1)}
   g_b^{E_{{I}_{\hat{j}}}} \in
 	W^{l+\gamma}(\partial\mathbb{H}_j^n).
\end{equation*} 	
 	Therefore,
 	Theorem \ref{estinvell} applies to give
 	the estimates
\begin{align*}
 	\nonumber
 	\sum_l \Big\|  
A_{-\alpha-rk-(r-l)}
 \left(
 \rho_{\hat{j}}^{lk\times(m-1)} g_j
 \right)& 
 	\Big\|_{W^{r+\alpha+\gamma - 1/2}
 		\left(
 		\mathbb{R}^n
 		\right)}\\
\lesssim&
 	\sum_l \Big\|  
A_{-\alpha-rk-(r-l)}
\left(
\rho_{\hat{j}}^{lk\times(m-1)} g_j
\right)
\Big\|_{W^{rk+r+\alpha+\gamma - 1/2}
	\left(
	\mathbb{R}^n
	\right)}
 \\
\lesssim&
 \sum_l \left\| 
     \rho_{\hat{j}}^{lk\times(m-1)}
 	g_b
 	\right\|_{W^{\gamma
 		 +l}
 		\left( \mathbb{H}_{I,bj}^{n-1}\right)}\\
 	\lesssim& 
 	\left\| 
 	g_b
 	\right\|_{W^{\gamma,r}
 		\left(
 		\mathbb{H}_{I,bj}^{n-1}
 		,\rho_{\hat{j}},k	\right)}
 	.
 	\end{align*}
 \end{proof}

The assumption that $g_b$ has compact support in
 $\overline{\mathbb{H}}_{I,bj}^{n-1}$ was used in order to apply
extensions by zero.  Such an assumption will not be needed
 when the analysis on bounded domains is applied. 

For operators which do not satisfy the 
 hypotheses of Theorem
\ref{estinvell} we can derive estimates in a similar
 manner to the method of 
Theorem \ref{estErrorEllBndry}.
\begin{thrm}
\label{estErrorWeighted}
  Let
	$A \in \Psi^{-\alpha}
	(\mathbb{R}^{n})$, $-\alpha \le -1$.
Then for $\beta-1/2$ an integer with
 $\beta\le\alpha-1/2$,
and $g_b\in W^{ 0,s}
\left(
\mathbb{H}_{I,bj}^{n-1} ,\rho_{\hat{j}},k
\right)$ with compact support in 
$\overline{\mathbb{H}}_{I,bj}^{n-1}$,
\begin{equation*}
\left\|
A\left(g_b^{E_{I_{\hat{j}}}}
\times \delta_j\right)
\right\|_{W^{\beta-1/2,s}
	\left(\mathbb{H}_I^n,
	\rho,k\right)}
\lesssim 
\| g_b\|_{W^{\beta-\alpha,s}
	\left(
	\mathbb{H}_{I,bj}^{n-1}
	,\rho_{\hat{j}},k	\right)}.
\end{equation*}
\end{thrm}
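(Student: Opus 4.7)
The plan is to follow the proof of Theorem \ref{ellBndryDist} line by line, invoking Theorem \ref{estErrorEllBndry} in place of Theorem \ref{estinvell} wherever the unweighted estimate is needed. The hypothesis $\beta\le\alpha-1/2$ arises in exactly the way the hypothesis $s\le|k|-1$ does in Theorem \ref{estErrorEllBndry}: it is the condition that must hold in order for the unweighted ($r=0$) summand of the output weighted norm to be controllable by the input data.

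Writing $g_j=g_b^{E_{I_{\hat{j}}}}\times\delta_j$, I would first unfold the target norm as
\[
\left\|A_{-\alpha}g_j\right\|_{W^{\beta-1/2,s}(\mathbb{R}^n,\rho,k)}
=\sum_{r=0}^{s}\left\|\rho^{rk\times m}A_{-\alpha}g_j\right\|_{W^{\beta-1/2+r}(\mathbb{R}^n)}.
\]
For each summand, Lemma \ref{liglem} (which uses $\rho_j\delta_j=0$) absorbs the transverse weight $\rho_j^{rk}$ into the symbol, after which the tangential weights $\rho_l^{rk}$, $l\neq j$, can be commuted through the resulting operator using standard pseudodifferential commutator calculus. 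This produces an expansion of the form
\[
\rho^{rk\times m}A_{-\alpha}g_j
=\sum_{l=0}^{r}\Psi^{-\alpha-rk-(r-l)}\left(\rho_{\hat{j}}^{lk\times(m-1)}g_j\right),
\]
entirely analogous to the one obtained in the proof of Theorem \ref{ellBndryDist}.

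Applying Theorem \ref{estErrorEllBndry} to each summand requires the order compatibility
\[
\beta-\tfrac{1}{2}+r\;\le\;\alpha+rk+(r-l)-1,
\]
i.e.\ $\beta+l\le\alpha+rk-\tfrac{1}{2}$. Since $l\ge 0$ and $rk\ge 0$, the strictest instance is $l=r=0$, recovering exactly $\beta\le\alpha-1/2$. Under this condition Theorem \ref{estErrorEllBndry} yields
\[
\left\|\Psi^{-\alpha-rk-(r-l)}\left(\rho_{\hat{j}}^{lk\times(m-1)}g_j\right)\right\|_{W^{\beta-1/2+r}(\mathbb{R}^n)}
\;\lesssim\;
\left\|\rho_{\hat{j}}^{lk\times(m-1)}g_b\right\|_{W^{\beta-\alpha+l-rk}\left(\mathbb{H}_{I,bj}^{n-1}\right)},
\]
and since $rk\ge 0$ the right-hand side is dominated by the $l$-th summand in the input weighted norm. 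Summing over $r$ and $l$ then gives the claim.

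The main obstacle I expect is the commutator expansion in the second paragraph. The $\rho_j^{rk}$ step is just an iterated application of Lemma \ref{liglem}, but the tangential weights need the full PDO commutator calculus, and the induction over the $m-1$ tangential directions must carefully track the lower-order remainders that become subsequent summands in the expansion. Once this bookkeeping is in place, Theorem \ref{estErrorEllBndry} handles both positive and negative values of the output Sobolev index (through its built-in reduction via $\Lambda^{-|s|}$), so no additional case analysis is needed to accommodate small $\beta$ or negative $\beta-\alpha$.
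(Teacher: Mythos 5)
Your proposal is correct and takes essentially the same approach as the paper: the paper's proof of Theorem \ref{estErrorWeighted} is literally the one-line remark that the proof of Theorem \ref{ellBndryDist} applies verbatim up to the final estimate, where Theorem \ref{estErrorEllBndry} is substituted for Theorem \ref{estinvell}, forcing the restriction $\beta\le\alpha-1/2$. You have unpacked that sentence faithfully — reproducing the same weight-absorption identity $\rho^{rk\times m}A_{-\alpha}g_j=\sum_{l=0}^{r}\Psi^{-\alpha-rk-(r-l)}\bigl(\rho_{\hat{j}}^{lk\times(m-1)}g_j\bigr)$ from the proof of Theorem \ref{ellBndryDist} and correctly tracing the order bookkeeping to locate where $\beta\le\alpha-1/2$ enters.

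\end{document}
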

\begin{proof}
	 The proof of Theorem \ref{ellBndryDist}
applies up to the last estimate where Theorem
 \ref{estErrorEllBndry} is to be applied as opposed to
Theorem \ref{estinvell}.  For this case we need to ensure
 $\beta \le \alpha -1/2$.  
\end{proof}

 In the case of operators acting
 on functions supported on all of
$\mathbb{H}^n_I$ we have the following
 weighted estimates
\begin{thrm}
\label{thrmWghtdFullHI}
 Let $A\in \Psi^{-\alpha}(\mathbb{R}^{n})$
  for $\alpha\ge 1$.  
Let $f\in W^{0,s}
 (\mathbb{H}^n_I,\rho,k)$.  Then
\begin{equation*}
 \| A f\|_{W^{\alpha,s}
 	 (\mathbb{H}_I^n,\rho,k)}
   \lesssim \| f\|_{W^{0,s}
   	(\mathbb{H}^n_I,\rho,k)}.
\end{equation*}
\end{thrm}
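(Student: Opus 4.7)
My plan is to induct on $s$. The base case $s=0$ reduces to the standard Sobolev mapping $\Psi^{-\alpha}: L^2(\mathbb{R}^n) \to W^\alpha(\mathbb{R}^n)$ applied to the zero-extension of $f$. For the inductive step, the reindexing $j \mapsto j-1$ identifies the terms $j=1,\ldots,s$ in the sum defining $\|Af\|_{W^{\alpha,s}(\mathbb{R}^n,\rho,k)}$ with the full sum $\|Af\|_{W^{\alpha,s-1}(\mathbb{R}^n,\rho,k)}$, which the inductive hypothesis bounds by $\|f\|_{W^{0,s-1}(\mathbb{H}^n_I,\rho,k)} \le \|f\|_{W^{0,s}(\mathbb{H}^n_I,\rho,k)}$. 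The proof therefore reduces to bounding the single $j=0$ term $\|\rho^\beta Af\|_{W^{\alpha+s}(\mathbb{R}^n)}$, where $\beta := (sk)\times m$.

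To handle this term, I would apply Lemma \ref{liglem} iteratively (in the general symbol-calculus form $\rho_i \circ A = A \circ \rho_i + \Psi^{-\alpha-1}$, valid for any distribution via the same $\eta$-integration by parts as in the lemma) to obtain the commutator expansion
\begin{equation*}
\rho^\beta \circ A = \sum_{\gamma \le \beta} c_\gamma\, \Psi^{-\alpha-(|\beta|-|\gamma|)} \circ \rho^\gamma.
\end{equation*}
Applying the standard Sobolev mapping to each pseudodifferential operator, the problem reduces to bounding each $\|\rho^\gamma f\|_{W^t}$, with $t := s - (|\beta|-|\gamma|)$, by some term of $\|f\|_{W^{0,s}(\mathbb{H}^n_I,\rho,k)}$.

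If $t \le 0$, bound $\|\rho^\gamma f\|_{W^t} \le \|\rho^\gamma f\|_{L^2} \lesssim \|f\|_{L^2(\mathbb{H}^n_I)}$, using that $\rho^\gamma$ is uniformly bounded on the (compactly supported, by the paper's local convention) $f$. If $t > 0$, set $r := s - t$ so that $|\gamma| = msk - r$. A counting argument using $\gamma_i \le sk$ and $k \ge 1$ then shows $\gamma_i \ge (s-r)k$ for all $i$: otherwise some $\gamma_{i_0} \le (s-r)k - 1$ would give $|\gamma| \le (m-1)sk + (s-r)k - 1 = msk - rk - 1$, forcing the impossibility $r(k-1) \le -1$. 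Writing $\rho^\gamma = \rho^{\gamma - (s-r)k\times m} \cdot \rho^{(s-r)k\times m}$, the polynomial prefactor and its derivatives up to order $t$ are bounded on the compact support, so the product rule yields
\begin{equation*}
\|\rho^\gamma f\|_{W^{s-r}} \lesssim \|\rho^{(s-r)k\times m} f\|_{W^{s-r}},
\end{equation*}
which is precisely the $r$-th term of $\|f\|_{W^{0,s}(\mathbb{H}^n_I,\rho,k)}$, completing the induction.

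The main obstacle is the combinatorial matching in the $t > 0$ case: verifying that every non-trivial term of the commutator expansion is dominated by some term in the definition of the RHS norm. The argument crucially exploits the factor of $k \ge 1$ built into the weighted-space definition --- this is precisely the slack that makes the componentwise inequality $\gamma \ge (s-r)k \times m$ follow from the global constraints $\gamma \le sk\times m$ and $|\gamma| = msk - r$, and the matching would fail without it.
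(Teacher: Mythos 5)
Your approach is the same as the paper's --- expand $\rho^\beta \circ A$ via iterated commutators (Lemma \ref{liglem}) and estimate each resulting term using the standard Sobolev mapping of $\Psi^{-\alpha-p}$ together with the weighted hypothesis on $f$ --- but your write-up is actually more rigorous than the printed proof. The paper asserts the identity $\rho^{rk\times m}Af = \sum_{l=0}^{r}\Psi^{-\alpha-(r-l)}\big(\rho^{lk\times m}f\big)$, which is exact only when $m=k=1$; for $m\ge 2$ or $k\ge 2$ the iterated commutator expansion produces additional terms $\Psi^{-\alpha-p}\circ\rho^{\gamma}$ with exponents $\gamma$ not of the special diagonal form $lk\times m$ (e.g.\ odd powers when $k=2$, or unbalanced multi-indices when $m\ge 2$). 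Your counting argument --- using $\gamma_i\le sk$ for each $i$ together with $|\gamma|=msk-r$ to force $\gamma_i\ge (s-r)k$ componentwise, with failure producing the impossible $r(k-1)\le -1$ --- is precisely the missing justification that every such intermediate term is dominated by one of the weighted-norm terms $\|\rho^{(s-r)k\times m}f\|_{W^{s-r}}$. Your closing observation that the built-in factor $k\ge 1$ in the weight is exactly what makes the absorption possible is a real insight the paper does not spell out.

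One detail worth adding: in your $t>0$ branch you pass from the bound $\|\rho^\gamma f\|_{W^{s-r}(\mathbb{R}^n)}\lesssim\|\rho^{(s-r)k\times m}f\|_{W^{s-r}(\mathbb{R}^n)}$ (where all norms are on $\mathbb{R}^n$ because the $\Psi$-mapping estimate lives there) to ``the $r$-th term of $\|f\|_{W^{0,s}(\mathbb{H}^n_I,\rho,k)}$,'' which is a norm on $\mathbb{H}^n_I$. The hypothesis only provides $\rho^{(s-r)k\times m}f\in W^{s-r}(\mathbb{H}^n_I)$, and the zero extension of an $W^{s-r}(\mathbb{H}^n_I)$ function across $m$ hyperplanes is not automatically $W^{s-r}(\mathbb{R}^n)$. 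This step is exactly the content of Lemma \ref{oddReflectWithRho}: the weight vanishes to order at least $(s-r)k\ge s-r$ at each face $\rho_j=0$, which kills the boundary $\delta$-type contributions in the distributional derivatives of the extension. The paper invokes that lemma at precisely this point, and your proof should as well.
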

\begin{proof}
 We have for $r\le s$,
\begin{align*}
 \rho^{rk\times m}
  A f= \sum_{l=0}^r
  A_{-\alpha-(r-l)}
\left(
	\rho^{lk\times m} f
\right).
\end{align*}
By Lemma \ref{oddReflectWithRho} we have
 $\rho^{lk\times m} f^{E_I} \in 
  W^l(\mathbb{R}^n )$ from which the estimates
\begin{align*}
\left\|
  \rho^{rk\times m} A f
\right\|_{W^{\alpha+r}(\mathbb{R}^n)} 
  \lesssim& 
\sum_{l=0}^r \left\|
	 \rho^{lk\times m} f
\right\|_{W^{l}(\mathbb{H}^n_I)} \\
 \lesssim& 
 \| f\|_{W^{0,r}
	(\mathbb{H}^n_I,\rho,k)}
\end{align*}
follow.  Summing over all $r\le s$ 
 finishes the proof.
\end{proof}

We can improve the above Theorem 
 by removing one of the
$\rho$ components and using Theorem
 \ref{estErrorEllInt}.  
 
{ 
	We say an operator $B\in \Psi^{-k}(\mathbb{R}^{n+1})$
	for $k\ge 1$ is
	{\it decomposable with respect to 
		$\mathbb{H}_{I,bj}^{n-1}$ } if for any $N\ge k$ 
	it can be written in the form 
	\begin{equation*} 
	B = A + A_{-N},
	\end{equation*}
	where $A\in \Psi^{-k}(\mathbb{R}^{n+1})$ is an operator 
	satisfying the hypothesis
	of Theorem \ref{estinvell} with respect to 
	$\mathbb{H}_{I,bj}^{n-1}$.
}

\begin{thrm}
Let $A\in \Psi^{-\alpha}(\mathbb{R}^{n})$
	for $\alpha\ge 1$ be decomposable with respect to 
	$\mathbb{H}_{I,bj}^{n-1}$, for some $j\in I$.
Let $f\in W^{0,s}
	(\mathbb{H}^n_I,\rho_{\hat{j}},k)$.  Then
\begin{equation*}
	\| A f\|_{W^{\alpha,s}
		(\mathbb{H}_I^n,\rho_{\hat{j}},k)}
	\lesssim \| f\|_{W^{0,s}
		(\mathbb{H}^n_I,\rho_{\hat{j}},k)}.
\end{equation*}
\end{thrm}
\begin{proof}
 The proof is almost the
same as that of Theorem \ref{thrmWghtdFullHI}.
We have for $r\le s$,
\begin{align*}
	\rho_{\hat{j}}^{rk\times (m-1)}
	A f= \sum_{l=0}^r
	A_{-\alpha-(r-l)}
	\left(
	\rho_{\hat{j}}^{lk\times (m-1)} f
	\right).
\end{align*}
By Lemma \ref{oddReflectWithRho} we have
	$\rho_{\hat{j}}^{lk\times (m-1)}
	 f^{E_{I_{\hat{j}}}} \in 
	W^l(\mathbb{H}^n_j )$.
Thus, an application of Theorem
\ref{thrmSobIntEst}
 yields 
	\begin{align*}
	\left\|
	\rho_{\hat{j}}^{rk\times (m-1)} A f
	\right\|_{W^{\alpha+r}(\mathbb{R}^n)} 
	\lesssim& 
	\sum_{l=0}^r \left\|
	\rho_{\hat{j}}^{lk\times (m-1)} f
	\right\|_{W^{l}(\mathbb{H}^n_j)} \\
	\lesssim& 
	\| f\|_{W^{0,r}
		(\mathbb{H}^n_I,\rho_{\hat{j}},k)}.
	\end{align*}
Summing over all $r\le s$ 
	finishes the proof.
\end{proof}

When working with boundary value problems
on intersection domains, or intersections of
half-spaces, 
  restrictions to one boundary
 of an operator applied to a distribution with
support on a different boundary arise.  

We let
$R_j$ denote the operator of 
restriction to the boundary, $\rho_j = 0$.
To
 deal with restrictions to one boundary 
  of an operator acting on a distribution supported
 on another boundary, we introduce some 
notation:
 for $\alpha+1/2\in \mathbb{N}$, 
$\alpha\ge 1/2$, and $j\neq k$,
\begin{equation*}
 \lre_{-\alpha}^{jk}:
 W^s\left(\overline{
 	\mathbb{H}}_{I,bj}^{n-1},\rho_{\hat{j}},\lambda \right) 
 \rightarrow
W^{s+\alpha}\left(\overline{
	\mathbb{H}}_{I,bk}^{n-1},\rho_{\hat{k}},\lambda\right)
\end{equation*}
{(with some restriction on $s$ to be introduced)},
where 
 $\lre_{-\alpha}^{jk}$ is of the form
\begin{equation}
\label{formLRE}
\lre_{-\alpha}^{jk} g_b
= R_k \circ B_{-\alpha-1/2} g_j,
\end{equation}
where, as above 
 $g_j := g_b^{E_{I_{\hat{j}}}} \times \delta_j$,
and
 where $B_{-\alpha-1/2}\in \Psi^{-\alpha-1/2}
 \left(\mathbb{R}^{n} \right)$
is decomposable with respect to 
$\mathbb{H}_{I,bj}^{n-1}$.

For some crude estimates
 in the case $1/2\le \beta\le \alpha -1$,
 we could write
\begin{align*}
\left\| \rho_{\hat{k}}^{r\lambda\times(m-1)}
\lre_{-\alpha}^{jk} g_b
\right\|_{W^{r+\beta-1/2}
	\left(\mathbb{H}_{I,bk}^{n-1}\right)}
\lesssim&
\left\| \rho_{\hat{k}}^{r\lambda\times(m-1)}
R_k \circ A_{-\alpha-\frac{1}{2}} g_j
\right\|_{W^{r+\beta-1/2}
	\left(\partial\mathbb{H}_k^n\right)}\\
\lesssim&
\left\| \rho_{\hat{k}}^{r\lambda\times(m-1)}
A_{-\alpha-\frac{1}{2}} g_j 
\right\|_{W^{r+\beta}
	\left(\mathbb{H}_k^n\right)}
\\
\lesssim&
\| g_b\|_{W^{\beta-\alpha,r}
	\left(
	\mathbb{H}_{I,bj}^{n-1}
	,\rho_{\hat{k}\hat{j}},\lambda	\right)},
\end{align*}
where in the last step we use
the estimates from
Theorem \ref{estErrorWeighted}
 (with some restrictions on which sets, such
  as the integers, $\alpha$ and $\beta$ belong to).  
And after summing over
$r\le s$ we would have the estimates
\begin{equation*}
\left\|
\lre_{-\alpha}^{jk} g_b
\right\|_{W^{\beta-1/2,s}
	\left(\mathbb{H}_{I,bk}^{n-1},
	\rho_{\hat{k}}\right)}
\lesssim \| g_b\|_{W^{\beta-\alpha,s}
	\left(
	\mathbb{H}_{I,bj}^{n-1}
	,\rho_{\hat{k}\hat{j}}	\right)}
\end{equation*}
for $\beta\le \alpha -3/2$.

However, we can improve the estimates
 for the operators with meromorphic
symbols in 
two ways.  First, the order of the
Sobolev spaces can be increased by making
use of relations between elliptic
operators acting on distributions supported
on the boundary.  Secondly, there is 
a loss of a factor of $\rho$ in the estimate
above; as $g_b\times \delta_j$ is supported
on $\mathbb{H}_{I,bj}^n$, a weighted estimate, 
using $\rho_{\hat{j}}$ is desired 
 on the right (as 
opposed to $\rho_{\hat{k}\hat{j}}$).
These improvements will be made in the
following Corollary of Theorems
  \ref{ellBndryDist} and \ref{estErrorWeighted}.
\begin{cor}
	\label{rhoLRE}
Let $\lre_{-\alpha}^{jk}$ as above,
 and $g_b\in W^{0,s}
 \left(
 \mathbb{H}_{I,bj}^{n-1} ,\rho_{\hat{j}}, {\lambda}
 \right)$.  Then
for $0\le \beta < \alpha$ 
\begin{equation*}
	\left\|
	\lre_{-\alpha}^{jk} g_b
	\right\|_{W^{\beta,s}
		\left(\partial\mathbb{H}_k^n,
		\rho_{\hat{k}},\lambda\right)}
	\lesssim \| g_b\|_{W^{\beta-\alpha,s}
		\left(
		\mathbb{H}_{I,bj}^{n-1}
		,\rho_{\hat{j}},\lambda	\right)}.
\end{equation*}
\end{cor}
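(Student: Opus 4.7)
The plan is to decompose $B_{-\alpha-1/2}$ into a meromorphic principal part plus a smoothing error, perform residue calculus in the variable $\eta_j$ (dual to $\rho_j$, where the delta of $g_j$ lives), restrict to $\rho_k=0$, and then recognize the resulting expression as an operator to which Theorem \ref{ellBndryDist} applies in the ambient space $\partial\mathbb{H}_k^n\simeq\mathbb{R}^{n-1}$, i.e., with one fewer normal variable. First write $B_{-\alpha-1/2}=A_{-\alpha-1/2}+\Psi^{-N}$ via decomposability, with $N$ chosen larger than $s+\alpha+\beta+2$. The smoothing error $\Psi^{-N}$ is controlled by Theorem \ref{estErrorWeighted} followed by the Sobolev trace theorem on $\{\rho_k=0\}$; the large choice of $N$ absorbs both the loss of half a derivative from the trace and any mismatch of weights.

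For the main part, since $\widehat{g_j}$ is independent of $\eta_j$, closing the contour in the lower half $\eta_j$-plane (using $\rho_j<0$) yields, as in the proof of Theorem \ref{estinvell},
\begin{equation*}
A_{-\alpha-1/2}(g_j)(x,\rho) = \int a_{q^-}(x,\rho,\xi,\eta_{\hat{j}})\,\widetilde{g_b^{E_{I_{\hat{j}}}}}(\xi,\eta_{\hat{j}})\,e^{ix\xi+i\rho_{\hat{j}}\eta_{\hat{j}}+i\rho_j q^-}\,d\xi\,d\eta_{\hat{j}},
\end{equation*}
where $a_{q^-}$ is of order $-\alpha+1/2$ in $(\xi,\eta_{\hat{j}})$ and $q^-$ has elliptic (strictly negative) imaginary part. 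Setting $\rho_k=0$ simply drops the factor $e^{i\rho_k\eta_k}$ from the exponent, and the resulting expression is smooth in $\rho_k$ near the boundary so the pointwise restriction is unambiguous.

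The key observation is that
\begin{equation*}
\lre_{-\alpha}^{jk} g_b(x,\rho_{\hat{k}}) = \int a_{q^-}|_{\rho_k=0}\,\widetilde{g_b^{E_{I_{\hat{j}}}}}(\xi,\eta_{\hat{j}})\,e^{ix\xi+i\rho_{\hat{j}\hat{k}}\eta_{\hat{j}\hat{k}}+i\rho_j q^-|_{\rho_k=0}}\,d\xi\,d\eta_{\hat{j}}
\end{equation*}
is structurally an operator of the type treated in Theorem \ref{ellBndryDist}, but in the reduced ambient space $\partial\mathbb{H}_k^n\simeq\mathbb{R}^{n-1}$ with intersection $\mathbb{H}_{I_{\hat{k}}}^{n-1}$, acting from the common edge $\mathbb{H}_{I,bj}^{n-1}$ via the distinguished normal direction $\rho_j$. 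Integrating out $\eta_k$, either by a second contour integration exploiting the ellipticity of the imaginary part of $q^-$ in $\eta_k$ or by the analogue of Lemma \ref{restrict} in that variable, converts the expression into a bona fide pseudodifferential operator of effective order $-\alpha$ on $\partial\mathbb{H}_k^n$ with meromorphic symbol in $\eta_j$. Theorem \ref{ellBndryDist} applied in this reduced dimension with parameters $(\alpha,\beta)$ then gives the desired bound for $\beta\ge 1/2$; the range $0\le\beta<1/2$ is handled by the $\Lambda$-regularization trick from Theorem \ref{estErrorEllBndry}.

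The main obstacle is justifying the $\eta_k$ integration and identifying the reduced operator as one to which Theorem \ref{ellBndryDist} applies, in particular verifying that the meromorphic structure in $\eta_j$ and the elliptic imaginary part of $q^-$ survive the $\eta_k$ integration with uniform symbol estimates. These properties are inherited from the decomposability assumption on $B_{-\alpha-1/2}$ and from the ellipticity of the underlying differential operators, so the bookkeeping, though tedious, follows the same template as in the proof of Theorem \ref{ellBndryDist} itself.
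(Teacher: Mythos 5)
Your plan of residue calculus in $\eta_j$, restriction to $\rho_k=0$, and reduction to the ambient space $\partial\mathbb{H}_k^n\simeq\mathbb{R}^{n-1}$ runs into a genuine problem: it would only recover the \emph{crude} estimate that the paper explicitly states before the Corollary, not the Corollary itself. If you apply Theorem \ref{ellBndryDist} in the reduced dimension with intersection $\mathbb{H}_{I_{\hat{k}}}^{n-1}$ and distinguished normal $\rho_j$, the weight appearing on the right-hand side is $\rho_{I_{\hat{k}}\setminus\{j\}}=\rho_{\hat{j}\hat{k}}$, whereas the Corollary asserts $\rho_{\hat{j}}$. These are different: the weight $\rho_{\hat{j}}$ retains the factor $\rho_k$, giving a strictly smaller norm and hence a strictly stronger estimate (the text just above the Corollary says precisely that this is the improvement being made). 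Moreover, in the reduced dimension the hypothesis of Theorem \ref{ellBndryDist} requires $g_b\in W^{\gamma,s}(\cdot,\rho_{\hat{j}\hat{k}},\lambda)$, which you do not have --- you are only given membership in the larger space $W^{\gamma,s}(\cdot,\rho_{\hat{j}},\lambda)$, so the reduction cannot be invoked as stated.

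The idea you are missing is the identity
\begin{equation*}
R_k\circ \Psi^{-\alpha}g_j \simeq R_k\circ \frac{\partial^{r\lambda}}{\partial\rho_k^{r\lambda}}\,\rho_k^{r\lambda}\,\Psi^{-\alpha}g_j,
\end{equation*}
valid because all the terms in the Leibniz expansion of $\partial_{\rho_k}^{r\lambda}\rho_k^{r\lambda}F$ with surviving powers of $\rho_k$ vanish upon restriction to $\rho_k=0$, and because the regularity of $\Psi^{-\alpha}g_j$ in $\rho_k$ (order $-\alpha\le -1$) prevents any $\delta$-type contributions. This trick converts the weight $\rho_{\hat{k}}^{r\lambda\times(m-1)}$ on the left into a full weight $\rho^{r\lambda\times m}$ that includes $\rho_j$, after which Lemma \ref{liglem} absorbs $\rho_j^{r\lambda}$ into the operator order and the remaining $\rho_{\hat{j}}^{l\lambda\times(m-1)}$ factors land on $g_b$ --- producing exactly the $\rho_{\hat{j}}$ weight on the right. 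The paper does not in fact reduce dimension and reapply Theorem \ref{ellBndryDist}; rather, it performs a direct $L^2$ estimate after the $\eta_j$ residue integration (using $\int_{-\infty}^0\rho_j^{2\alpha_2}e^{\rho_j\nu_j}\,d\rho_j\simeq\nu_j^{-2\alpha_2-1}$ together with the ellipticity of $\mathrm{Im}\,\varsigma_j$) to get an unweighted estimate \eqref{estNoRho}, and then inserts the weighted estimates via the derivative--multiplication identity above. You should also note that after setting $\rho_k=0$ and integrating over $\eta_k$, the object is a Poisson-type operator with a factor $e^{i\rho_j\varsigma_j}$ in the integrand; it is not a classical pseudodifferential operator on $\partial\mathbb{H}_k^n$, so the identification you propose requires more care than you give it, even setting aside the weight discrepancy.
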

\begin{proof}
For given $\alpha,s$, we choose $N$
 large,  $\alpha  +s \ll N$, and
we write
\begin{equation}
\label{decompCor}
 \lre_{-\alpha}^{jk} g_b
 = R_k \circ A_{-\alpha-\frac{1}{2}} g_j
  + R_k \circ A_{-N} g_j,
\end{equation}
where
 $A_{-\alpha-\frac{1}{2}}$ satisfies the
  hypotheses of Theorem
  \ref{estinvell} with respect to 
  $\mathbb{H}_{I,bj}^{n-1}$.

{ 
We first estimate
$
R_k \circ A_{-N} g_j
$:
\begin{equation*}
\left\|
R_k \circ A_{-N}g_j
\right\|_{W^{\beta+s} 
	(\mathbb{R}^{n-1})}^2 
\lesssim
 	\int 
 \frac{\left(\eta_{\hat{k}}^2 + \xi^2\right)^{\beta+s}}
  {(1+\eta^2
 	+ \xi^2)^{N}}
 |\widetilde{g}_b(\xi,\eta_{\hat{j}})|^2
 d\xi d\eta
\end{equation*} 
Integrating over $\eta_j$ yields
\begin{align*}
\left\|
R_k \circ A_{-N}g_j
\right\|_{W^{\beta+s} 
	(\mathbb{R}^{n-1})}^2 
\lesssim&
\int 
\frac{\left(\eta_{\hat{j}}^2 + \xi^2\right)^{\beta+s}}
{(1+\eta_{\hat{j}}^2
	+ \xi^2)^{N-1/2}}
|\widetilde{g}_b(\xi,\eta_{\hat{j}})|^2
d\xi d\eta \\
\lesssim&
\| g_b
\|^2_{W^{\beta-\alpha}(\mathbb{H}_{I,bj}^{n-1})},
\end{align*} 
from the assumption $\alpha + s \ll N$.

To show
\begin{equation}
\label{estNoRho}
\left\|
R_k \circ A_{-\alpha-\frac{1}{2}}g_j
\right\|_{W^{\beta}
	(\mathbb{R}^{n-1})}^2
\lesssim
\| g_b
\|^2_{W^{\beta-\alpha}(\mathbb{H}_{I,bj}^{n-1})},
\end{equation}
for {$\beta < \alpha $, we estimate}
\begin{align*}
 \left\|
 R_k \circ A_{-\alpha-\frac{1}{2}}g_j
 \right\|_{W^{\beta}
 	(\mathbb{R}^{n-1})}^2
 \lesssim&
 \int 
 \frac{ \left(\eta^2 + \xi^2\right)^{\beta}}
 {(1+\eta^2
 	+ \xi^2)^{\alpha + 1/2}}
 |\widetilde{g}_b(\xi,\eta_{\hat{j}})|^2
 d\xi d\eta\\
  \lesssim&
\int 
\frac{\left(\eta_{\hat{j}}^2 + \xi^2\right)^{\beta}}
{(1+\eta_{\hat{j}}^2
	+ \xi^2)^{\alpha}}
|\widetilde{g}_b(\xi,\eta_{\hat{j}})|^2
d\xi d\eta_{\hat{j}}\\
\lesssim&
\| g_b
\|^2_{W^{\beta-\alpha}(\mathbb{H}_{I,bj}^{n-1})}.
\end{align*}

}

When calculating the weighted estimates 
	on $\mathbb{H}_{I,bk}$,
with weights which do not include
factors of $\rho_k$, we use the identity
\begin{equation*}
{\rho_{\hat{k}}^{s\lambda\times(m-1)}}
R_k \circ A_{-\alpha} g_j
	\simeq R_k\circ
\frac{\partial^s}{\partial\rho_k^s}
	\rho^{s\lambda\times m} A_{-\alpha} g_j,
\end{equation*}
with $\alpha\ge 1$.  Note that the 
	regularity provided by the order,
	$-\alpha\le -1$
 of the pseudodifferential operator ensures
  that derivatives with
respect to $\rho_k$ produce no $\delta$
distributions
(or derivatives of $\delta$) terms. 
Thus, we write 
\begin{equation*}
	\rho_{\hat{k}}^{r\lambda\times(m-1)}
	R_k \circ A_{-\alpha-\frac{1}{2}} g_j
\simeq
R_k \circ
 \frac{\partial^{r\lambda}}
   {\partial\rho_k^{r\lambda}}
	\rho^{r\lambda\times m} 
A_{-\alpha-\frac{1}{2}}g_j,
\end{equation*}
	and further, we use the relation
\begin{align*}
\rho^{r\lambda\times m} 
	A_{-\alpha-\frac{1}{2}} g_j 
=& \rho^{r\lambda\times (m-1)}_{\hat{j}}
	A_{-\alpha-1/2-r\lambda} g_j \\
=&  \sum_{l=0}^{r}
	A_{-\alpha-1/2-r\lambda-(r-l)}
	\left(
	\rho^{l\lambda\times(m-1)}_{\hat{j}} g_j
	\right) 
\end{align*}
	to show
\begin{equation*}
\frac{\partial^{r\lambda}}
 {\partial\rho_k^{r\lambda}}
	\rho^{r\lambda\times m} 
	A_{-\alpha-\frac{1}{2}}g_j
	= \sum_{l=0}^{r}
	A_{-\alpha-1/2-(r-l)}
	\left(
	\rho^{l\lambda\times(m-1)}_{\hat{j}} g_j
	\right).
\end{equation*}
	Therefore, we have
\begin{equation}
 \label{withAlpha}
	\rho_{\hat{k}}^{r\lambda\times(m-1)}
	R_k \circ A_{-\alpha-\frac{1}{2}}
	g_j 
=R_k \circ \sum_{l=0}^{r}
A_{-\alpha-1/2-(r-l)}
\left(
	\rho^{l\lambda\times(m-1)}_{\hat{j}} g_j
\right).
\end{equation}

We note the symbols of the $A_{-\alpha-1/2-(r-l)}$ operators can be bounded by
\begin{equation*}
\frac{(1+\eta_k^2)^{r\lambda/2}}{(1+\eta^2+\xi^2)^{{1/2}(\alpha+1/2+(r-l)+r\lambda)}}
\end{equation*}
modulo lower order symbols with bounds of the same form.  

	The calculation of the estimates then 
{follow those above. 
In particular, for the case 
 $\beta < \alpha$, we have 
\begin{align*}
	\Big\|
\rho_{\hat{k}}^{r\lambda \times(m-1)}
&R_k \circ A_{-\alpha-\frac{1}{2}}
g_j
\Big\|_{W^{\beta+r}
	(\mathbb{R}^{n-1})}\\
& \lesssim \sum_{l=0}^{r}
\int 
\left(\eta^2 + \xi^2\right)^{\beta+r}
\frac{(1+\eta_k^2)^{r\lambda}}{(1+\eta^2+\xi^2)^{\alpha+1/2+(r-l)+r\lambda}}
|\widetilde{h}_j^{l}(\xi,\eta_{\hat{j}})|^2
d\xi d\eta\\
&\lesssim
\sum_{l=0}^{r} \int 
\frac{\left(\eta_{\hat{j}}^2 + \xi^2\right)^{\beta+r+r\lambda}}
{(1+\eta_{\hat{j}}^2
	+ \xi^2)^{\alpha+(r-l)+r\lambda}}
|\widetilde{h}_j^{l}(\xi,\eta_{\hat{j}})|^2
d\xi d\eta_{\hat{j}}\\
&\lesssim
\| g_b
\|^2_{W^{\beta-\alpha,r}(\mathbb{H}_{I,bj}^{n-1})},
\end{align*}
where $h_j^{l} = \rho^{l\lambda\times(m-1)}_{\hat{j}} g_j$.

}	
Summing over $r\le s$ 
give the weighted estimates in terms of 
 $ \| g_b
 \|^2_{W^{\beta-\alpha,s}(\mathbb{H}_{I,bj}^{n-1})},$
as in the statement of the Corollary.   
\end{proof}

 For boundary operators mapping
$\mathbb{H}^n_{I,bj}$ to itself we use the
 notation $\lre_{-\alpha}^{jj}$ to denote
 operators of the form
\begin{equation*}
 \lre_{-\alpha}^{jj} = R_j \circ
B_{-\alpha-1},
\end{equation*}
 where $B_{-\alpha-1} \in 
  \Psi^{-\alpha-1}(\mathbb{R}^n)$ is 
 decomposable with respect to 
 $\mathbb{H}^n_{I,bj}$,
for $\alpha\ge 1$,
 and thus of the form
\begin{equation*}
\lre_{-\alpha}^{jj} =  
 A_{-\alpha, bj},
\end{equation*} 
using Lemma \ref{restrict},
for $\alpha \ge 1$.  We also use the
notation
  to denote
 compositions:
\begin{equation*}
 \lre_{-\alpha}^{jj}
  = \lre_{-\alpha_1}^{kj} \circ
    \lre_{-\alpha_2}^{jk},
\end{equation*}
where $\alpha=\alpha_1+\alpha_2$
 and $\alpha_1,\alpha_2 \ge 1/2$.

As a corollary of  Theorem \ref{thrmWghtdFullHI}, we have
\begin{equation*}
\left\|
	R_j \circ B_{-\alpha-1}g_j
\right\|_{W^{\beta,s}
	\left(\partial\mathbb{H}_j^n,
	\rho_{\hat{j}},\lambda\right)}
\lesssim \| g_b\|_{W^{\beta-\alpha,s}
 	\left(
 	\mathbb{H}_{I,bj}^{n-1}
 	,\rho_{\hat{j}},\lambda	\right)},
\end{equation*}
{for $ \beta < \alpha + 1/2$,}
while Corollary
 \ref{rhoLRE} applied (twice) to 
$\lre_{-\alpha}^{jj}
= \lre_{-\alpha_1}^{kj} \circ
\lre_{-\alpha_2}^{jk}$ yields
\begin{equation*}
\left\|
\lre_{-\alpha_1}^{kj} \circ
\lre_{-\alpha_2}^{jk}g_b
\right\|_{W^{\alpha_1 - \epsilon ,s}
	\left(\partial\mathbb{H}_j^n,
	\rho_{\hat{j}},\lambda\right)}
\lesssim \| g_b\|_{W^{-\alpha_2,s}
	\left(
	\mathbb{H}_{I,bj}^{n-1}
	,\rho_{\hat{j}},\lambda	\right)},
\end{equation*}
{for $\epsilon>0$.}

\section{Example on an intersection of two domains}
\label{secEx}

 We close this paper with a mention of how the
weighted estimates in the previous section may be applied.
The motivation for the introduction of
  the weighted estimates
  is the 
study of estimates for operators of
 boundary value problems such as the 
  Poisson and Green operators.  
Let $\Omega_1, \ldots, \Omega_m \subset
 \mathbb{R}^n$ be smoothly bounded domains
which intersect real transversely.
 That is to say, if $\rho_j$ is a smooth 
defining function for $\Omega_j$, 
 $|d\rho_j| \neq 0$ on $\partial\Omega_j$,
 then
for all $1\le i_1 < \cdots < i_l \le m$, we have
\begin{equation*}
d\rho_{i_1}\wedge \cdots\wedge d\rho_{i_l} \neq 0
\end{equation*}
on $\bigcap_{j=1}^l \partial\Omega_{i_j} 
\cap \partial\Omega$, 
where $\rho_j$ is a defining function for
$\Omega_j$,
We say in this case $\Omega$ is an 
 {\it intersection} domain.  An intersection
  domain is an example of a 
 {\it piecewise smooth} domain
  (see \cite{RS}, from which we base our definition of 
   intersection domains).
Then using a suitable metric 
 locally near a point on 
$\partial\Omega$ the intersection 
 can be modeled by the intersection of
$m$ half-spaces.

 To illustrate some of the reduction to the boundary
techniques on intersection domains, we consider 
 $\Omega = \Omega_1 \cap \Omega_2$, with
  $\Omega_j$ a bounded smooth domain for
$j=1,2$.  We take $\Gamma$ in this section 
 to be a second order elliptic operator,
and consider a Dirichlet problem
\begin{equation*}
\begin{aligned}
\Gamma v =& 0  \qquad \mbox{in } \Omega\\
v =& g \qquad \mbox{on } \partial \Omega.
\end{aligned}
\end{equation*}
The boundary condition can be expressed on
 the individual boundaries as
\begin{equation*}
 v_j = g_j \qquad \mbox{on } \partial \Omega_j,
\end{equation*} 
where $v_j = v|_{\partial\Omega_j}$
 for $j=1,2$
 (with a similar notation holding for
  $g_j$).  
As in the previous Section, we
  will use $R_j$ to denote the operator of 
 restriction to the boundary, $\partial\Omega_j$.
Then with this operator we can also write
 $g_j = R_j v$.
  
Using a partition of unity we assume $v$ and $g$
 have support in a neighborhood of a point of 
 intersection (at which $\rho_1=\rho_2=0$)
  which we take to be the origin.
 We assume $\Gamma$ has a local expression in
a neighborhood containing the support of $v$
 and $g$ of the form
\begin{multline}
\label{pureDiff}
\Gamma = - \partial_{\rho_1}^2
- \partial_{\rho_2}^2
- \sum_{j=1}^{n-2} \partial_{x_j}^2
+ \sum_{ij} c_{ij}(x) \partial_{x_i}
\partial_{x_j}
 + \sum_i b_i(x,\rho) \partial_{x_i}
\\
+ O(\rho_1)A_2 + O(\rho_2)A_2
+s_1(x,\rho) \partial_{\rho_1}
+s_2(x,\rho) \partial_{\rho_2},
\end{multline}
where $\rho_j$ is a defining function 
 for $\Omega_j$ for $j=1,2$, with 
$c_{ij}(x) = O(x)$ and 
 smooth $b_i$ ($1\le i \le n-2$),
$s_1$ and $s_2$.

We let $\eta_j$ be the transform variable dual
 to the $\rho_j$ and $\xi_j$ dual to $x_j$,
and  $\xi = (\xi_1, \ldots, \xi_{n-2})$
 as well as
 $\eta = (\eta_1,\eta_2)$.
In writing \eqref{pureDiff}  as an equation with 
 pseudodifferential operators
we use the following notations with Fourier Transforms.  
The full transform of a function, $w$, will be written
 with the duals to the defining functions first:
\begin{equation*}
 \widehat{w}(\eta,\xi) = \frac{1}{(2\pi)^n}
   \int w(\rho,x) e^{-i\rho\eta} e^{-ix\xi} d\rho dx.
\end{equation*}
Partial Fourier Transforms will be denoted with the 
 $\hat{j}$ notation to indicate which $\rho$ variable 
is not to be transformed.  Thus
\begin{equation*}
F.T._{\hat{1}} w(\rho_1,\eta_2,\xi) = \frac{1}{(2\pi)^{n-1}}
\int w(\rho,x) e^{-i\rho_2\eta_2} e^{-ix\xi} d\rho_2 dx.
\end{equation*}
We also use a non-standard notation to set a specific
 value, which will always be 0, to a variable.
We will use the $\widehat{w}$ notation even to 
 indicate a partial Fourier Transform.  We then rearrange 
 arguments to write that set value first:
\begin{equation*}
 \left. F.T._{\hat{j}} w  \right|_{\rho_j =0} =
\widehat{w}(0_j, \eta_k, \xi),
\end{equation*}
where $k\neq j$.  Thus, for instance,
\begin{equation*}
 \widehat{w}(0_2,\eta_1,\xi) :=
   \frac{1}{(2\pi)^{n-1}} \int  w(\rho_1,0,x) e^{-i\rho_1\eta_1} e^{-ix\xi} d\rho_1 dx.
\end{equation*}
This is not to be confused with
 $\widehat{w}( \eta_1,0, \xi) = \widehat{w}|_{\eta_2=0}$.
 
With this notation, we rewrite \eqref{pureDiff} in terms of Transforms 
  as
\begin{align}
\nonumber
\frac{1}{(2\pi)^{2n}}	\int \bigg(
&\eta^2 +  \xi^2 -
	\sum c_{jk}
\xi_j \xi_k
+\sum_j O(\rho_j) \bigg)  \widehat{v}(\eta,\xi)
e^{i \rho \cdot \eta} e^{ix\xi} d\eta  d\xi\\
\nonumber
&-
\frac{1}{(2\pi)^{2n}}	\sum_{j=1}^2 \int
\bigg(F.T._{\hat{j}} \partial_{\rho_j}
v(0_j,\eta_{\hat{j}},\xi) +
i\eta_j 
\widehat{g}_{j} (\eta_{\hat{j}},\xi) 
\bigg)
e^{i \rho \cdot \eta} e^{ix\xi} d\eta  d\xi\\
\label{origPoissonFourier}
&
+ \sum_{j=1}^{2} S_j
(\partial_{\rho_j} v) +	Bv  = 0,
\end{align}
where $S_j$ is the zeroth operator with 
 symbol $s_j(x,\rho)$, and $B$ a first order
operator with symbol
 $i\sum_{j=1}^{n-2} b_j(x,\rho)\xi_j$.
Note that $S_j
(\partial_{\rho_j} v)$ can be written
\begin{equation*}
 S_j (\partial_{\rho_j} v) = 
  \frac{1}{(2\pi)^{2n}}	\sum_j \int
     s_j(x,\rho) 
   \left(
  \widehat{g}_{j}(\eta_{\hat{j}},\xi) 
+i\eta_j \widehat{v} (\eta,\xi) 
   \right)
    e^{i \rho \cdot \eta} e^{ix\xi} d\eta  d\xi.
\end{equation*}
We define the symbol, $\Xi$, 
 by 
\begin{align*}
\Xi(x,\rho,\xi) = \bigg(
   \eta^2 + \xi^2 -
   \sum c_{jk}
   \xi_j \xi_k
   +\sum_j O(\rho_j) \bigg)^{1/2},
\end{align*}
where the $O(\rho_j)$ terms are those in 
 \eqref{origPoissonFourier}.

Applying an inverse of the highest order terms
 in \eqref{origPoissonFourier}
yields
\begin{align}
\nonumber
v= 
&
\frac{1}{(2\pi)^{2n}}	\sum_{j=1}^2 \int
\frac{F.T._{\hat{j}} \partial_{\rho_j}
v(0_j,\eta_{\hat{j}},\xi) +
i\eta_j 
\widehat{g}_{j} (\eta_{\hat{j}},\xi) }
 {\eta^2+\Xi^2(\rho,x,\xi)}
e^{i \rho \cdot \eta} e^{ix\xi} d\eta  d\xi\\
\nonumber
&
+ A_{-3}(\partial_{\rho}v|_{\partial\Omega})
 + A_{-2} g +A_{-1} v,
\end{align}
locally.

We now expand $\Xi^2(x,\rho,\xi)$ in each
$\rho_j$, writing
$\Xi_{bj} = \Xi|_{\rho_j=0}$:
\begin{align}
\nonumber
v=& 
\sum_{j=1}^2 \frac{1}{(2\pi)^{2n}} \int \frac{
	F.T._{\hat{j}}  \partial_{\rho_j} 
	v(0_j,\eta_{\hat{j}},\xi) +
	i\eta_j \widehat{g}_{j}
	(\eta_{\hat{j}},\xi)}
{\eta^2 +\Xi_{bj}^2}
e^{i \rho \cdot \eta} e^{ix\xi} d\eta  d\xi\\
\nonumber
& + A_{-1} v +
\sum \rho_j
A_{-2}\left(\partial_{\rho_j}
u \big|_{\partial\Omega_j}
\right) +
\sum \rho_j A_{-1} g_{j} \\
\nonumber
&+
A_{-3}\left(\partial_{\rho}
v \big|_{\partial\Omega}
\right) +A_{-2} g.
\end{align}

Using Lemma \ref{liglem} to handle the
$\rho_j$ terms multiplied with 
 the $A_{-1}$ and $A_{-2}$ operators,
we have
\begin{align}
\nonumber
v=& 
\sum_{j=1}^2 \frac{1}{(2\pi)^{2n}} \int \frac{
	F.T._{\hat{j}}  \partial_{\rho_j} 
	v(0_j,\eta_{\hat{j}},\xi) +
	i\eta_j \widehat{g}_{j}
	(\eta_{\hat{j}},\xi)}
{\eta^2 +\Xi_{bj}^2}
e^{i \rho \cdot \eta} e^{ix\xi} d\eta  d\xi\\
& 
\label{vPoissonBasic}
+
A_{-3}\left(\partial_{\rho}
v \big|_{\partial\Omega}
\right) +A_{-2} g+
A_{-\infty} v.
\end{align}

In the integral in \eqref{vPoissonBasic}
for $j=1$,
 we integrate with respect to
  $\eta_1$
for $\rho_1>0$.
We then let $\rho_1\rightarrow 0^+$ to obtain
\begin{align*}
\nonumber
0=& 
\frac{i}{(2\pi)^{2n-1}} \int \frac{
	F.T._{\hat{1}} \partial_{\rho_1}
	v(0_1,\eta_{2},\xi) -
	\sqrt{\eta_{2}^2+ 
		\Xi_{b1}^2}  \widehat{g}_{1}
	(\eta_{2},\xi)}
{2i\sqrt{\eta_{2}^2+ 
		\Xi_{b1}^2} }
e^{i \rho_{2} \cdot \eta_{2}} e^{ix\xi} 
d\eta_{2}  d\xi\\
\nonumber
& + 
\frac{1}{(2\pi)^{2n}} R_1 \circ \int \frac{
	F.T._{\hat{2}}  \partial_{\rho_2} 
	v(0_2,\eta_{1},\xi) +
	i\eta_2 \widehat{g}_{2}
	(\eta_{1},\xi)}
{\eta^2 +\Xi_{b2}^2}
e^{i \rho \cdot \eta} e^{ix\xi} d\eta  d\xi\\
\nonumber
&  
+
A_{-2,b1}\left(\partial_{\rho_1} 
v \big|_{\partial\Omega_1} 
\right)
+
A_{-1,b1} g_{1} 
+ R_{b1}^{-\infty}\\
&+ R_1 \circ 
A_{-3}\left(\partial_{\rho_2}
v \big|_{\partial\Omega_2} \right) +
R_1 \circ A_{-2} g_{2},
\end{align*}
where $R_{b1}^{-\infty}$ refers to smooth terms on $\partial\Omega_1$.

Note that the second term on the 
 right side above can be 
 written, according to our notation
from Section \ref{secInterHalf},
 as
$\lre_{-3/2} ^{21}(\partial_{\rho_2}v|_{\partial\Omega_2})
 + \lre_{-1/2}^{21} g_{2}$, as can the last two 
terms.  Inverting the operator with
 symbol
locally given by 
 $1/2\sqrt{\eta_2^2+\Xi_{b1}^2}$ yields
\begin{multline}
\label{bndry1}
 \partial_{\rho_1} v |_{\partial\Omega_1}
   = |D_{b1}| g_1 +
    A_{1,b1} \circ \lre_{-1/2}^{21} g_{2} 
   + A_{0, b1} g_1\\ + 
   A_{-1,b1}  (\partial_{\rho_1}v|_{\partial\Omega_1})
   +
 \lre_{-1/2}^{21} (\partial_{\rho_2}v|_{\partial\Omega_2}),
\end{multline}
where $|D_{b1}|$ is the operator with symbol 
 (in an neighborhood of the origin)
 given by
$\sqrt{\eta_2^2+\Xi_{b1}^2}$.

Similar calculations lead to the expression
\begin{multline}
\label{bndry2}
\partial_{\rho_2} v |_{\partial\Omega_2}
= |D_{b2}| g_2 +
A_{1,b2} \circ \lre_{-1/2}^{12} g_{1} 
+ A_{0,b2} g_2 + \\
A_{-1}  (\partial_{\rho_2}v|_{\partial\Omega_2})
+
\lre_{-1/2}^{12} (\partial_{\rho_1}v|_{\partial\Omega_1})
\end{multline}
for $\partial_{\rho_2} v |_{\partial\Omega_2}$.
 We note the occurrence of the $ \lre_{-1/2}^{jk}$
operators, which shows
 an expression for the Dirichlet to Neumann operator (DNO),
giving the boundary values of the normal derivative of $v$, 
 on intersection domains is not as simple as finding 
an expansion in terms of pseudodifferential operators 
 as in the case of smooth domains.
 
Before we further explore reduction to the boundary techniques, we use \eqref{bndry1} and \eqref{bndry2}
 to obtain the first few principal terms in the
Poisson operator.  Inserting \eqref{bndry1}
  and \eqref{bndry2} in 
\eqref{vPoissonBasic}, we obtain
\begin{align}
\nonumber
v
=&   \Theta^+_1 g_{1}
+  \Theta^+_2 g_{2}
+ \sum_{j, k}A_{-1} \circ \lre_{-1/2}^{kj} g_{k}
+
\sum_{j} A_{-2}
g_{j}\\
\label{vFormDNO}
&+
\sum_{j,k} A_{-2}
\circ \lre_{-1/2}^{kj} 
\left(
\partial_{\rho_k} v \big|_{\partial\Omega_k}
\right) 
+
A_{-3}\left(\partial_{\rho}
v \big|_{\partial\Omega}\right) 
+ R^{-\infty},
\end{align}
where $R^{-\infty}$ refers to smooth terms, and 
where the symbol of $\Theta_j^+$ is locally 
\begin{equation*}
 \sigma(\Theta^+_j) 
  = \frac{i}{\eta_j + i \Xi_{bj}}.
\end{equation*}
Such an expression can be used to determine the 
 mapping properties of the Poisson operator
using the weighted spaces of Section \ref{secInterHalf}.
 Thus we see the expression for the 
 Poisson operator on the intersection domain
  contains a sum
of the principal terms of the individual Poisson operators for each domain
 comprising the intersection.  This can be generalized to
  any number of intersections (with $m\le n$).  

Consider now the boundary value problem of the form
\begin{equation*}
\begin{aligned}
\Gamma v =& 0  \qquad \mbox{in } \Omega\\
\partial_{\nu} v + Bv =& 
 g \qquad \mbox{on } \partial \Omega,
\end{aligned}
\end{equation*}
where $\partial_{\nu}$ denotes the normal derivative
 ($\partial_{\rho_j}$ on $\partial \Omega_j$, $j=1,2$),
and $B$ is a tangential pseudodifferential operator.

We use the techniques above to reduce to the
 boundary.  From \eqref{bndry1} and
  \eqref{bndry2}, the boundary condition gives the 
two relations:
\begin{equation*}
 |D_{bj}| v_{bj} + Bv_{bj}+
 A_{1,bj}\circ \lre_{-1/2}^{kj} v_{bk}
 + A_{0,bj} v_{bj} + 
 A_{-1}  (\partial_{\rho_j}v|_{\partial\Omega_j})
 +
 \lre_{-1/2}^{kj} (\partial_{\rho_k}v|_{\partial\Omega_k})
=0
\end{equation*}
for $j,k=1,2$, $j\neq k$.
 Solving this system together with
  \eqref{bndry1} and \eqref{bndry2},
   would lead to a solution
for the Dirichlet problem.  The boundary solutions
 would be inserted in \eqref{vFormDNO} to obtain an
expression for $v$ on the entire domain.
In particular, if $|D_{bj}| + B$ forms
 an elliptic system weighted estimates can 
be obtained by inverting the highest order
 term, $|D_{bj}| + B$, and considering the remaining
terms as error terms (which lead to estimates in
 lower level Sobolev spaces).  One would
of course need estimates for the boundary values 
 of the normal derivatives, estimates which could be 
  obtained with the help of \eqref{bndry1} and
\eqref{bndry2}.
 
We can immediately see the need for weighted estimates 
 due to the $\lre_{-\alpha}^{jk}$ operators.  
The use of weights allows us to consider higher order
 Sobolev estimates, which would be unobtainable without
the introduction of terms which deal with the
 singularity at the points of intersection.  
 { Weights are also needed even for the 
	pseudodifferential operators, since the arguments are considered to be
	extended by zero.  For instance, 
	for a function, $u_{bj} \in W^s(\partial\Omega_j\cap\partial\Omega)$ 
	in a
	term of the form $A_{0,bj}u_{bj}$, the argument,
	$u_{bj}$ is considered to be extended by zero to all of
	$\partial\Omega_j$ and this extension is not 
	in $W^{s}(\partial\Omega)$ }. 
 
Of particular interest to the author is the case
 in which $|D_{bj}| + B$ does not form an 
elliptic system.  Such is the case in the 
 \dbar-Neumann problem on intersection domains.  
In this case, the zeroth order terms in the 
 expansion of the DNO are required, as well as 
a method to deal with the $\lre_{-1/2}^{jk}$ operators. 
 Such is the subject of a current study, which the
author will publish separately.

\end{document}